      \def\sF{{\mathfrak F}}
\def\sJ{{\mathfrak J}}      \def\sL{{\mathfrak L}}
\def\sP{{\mathfrak P}}      
\def\sS{{\mathfrak S}}   \def\sT{{\mathfrak T}}   \def\sU{{\mathfrak U}}
   \def\sZ{{\mathfrak Z}}
      \def\dC{{\mathbb C}}
      \def\dR{{\mathbb R}}
   \def\dZ{{\mathbb Z}}
\def\cP{{\mathcal P}}
\def\bm\chi{\mbox{\boldmath$\chi$}}
\def\diag{{\rm diag\,}}
\let\xker=\ker \def\ker{{\xker\,}}
\def\sg{\operatorname{sign}}
\def\sg{\operatorname{sign}}
\def\deg{\operatorname{deg}}
\newtheorem{thm}{Theorem}[section]
\newtheorem{prop}[thm]{Proposition}
\theoremstyle{definition}
\newtheorem{defn}[thm]{Definition}
\theoremstyle{remark}
\newtheorem{rem}[thm]{Remark}
\newtheorem{theorem}{Theorem}[section]
\theoremstyle{remark}
\newtheorem{remark}[theorem]{Remark}
\numberwithin{equation}{section}
\begin{document}

\title[Darboux transformations and polynomial mappings]
{On the relation between Darboux transformations and polynomial mappings}

\author{Maxim Derevyagin}
\address{
Maxim Derevyagin\\
Department of Mathematics MA 4-2\\
Technische Universit\"at Berlin\\
Strasse des 17. Juni 136\\
D-10623 Berlin\\
Germany}
\email{derevyagin.m@gmail.com}

\date{\today}

 \subjclass{Primary 42C05, 47B36; Secondary 15A23, 30B70, 30E20.}
\keywords{Non-regular functional, orthogonal polynomials, monic Jacobi matrix, monic generalized Jacobi matrix, definitizable operator,
Darboux transformation, triangular factorization,  unwrapping transformation,  Stieltjes function, definitizable function.}

\begin{abstract} 
Let $d\mu$ be a probability measure on $[0,+\infty)$ such that its moments are finite. Then the Cauchy-Stieltjes transform $S$
of $d\mu$ is a Stieltjes function, which admits an expansion into a Stieltjes continued fraction. 
In the present paper, we consider a matrix interpretation of the unwrapping transformation $S(\lambda)\mapsto\lambda S(\lambda^2)$,
which is intimately related to the simplest case of polynomial mappings. More precisely, it is shown that this transformation is essentially
a Darboux transformation of the underlying Jacobi matrix. Moreover, in this scheme, the Chihara construction of solutions to the Carlitz problem 
appears as a shifted Darboux transformation.
\end{abstract}
\maketitle
%%%%%%%%%%%%%%%%%%%%%%%%%%%%%%%%%%%%%%%%%%%%%%%%%%%%%%%%%%%%%%%%%%%%%%%%%%%%%%%%%%

\section{Introduction}

It is well known that the monic Hermite polynomials $H_n$ can be expressed in terms of the monic Laguerre polynomials
$L_n^{(-1/2)}$ and $L_n^{(1/2)}$ by means of the formulas~\cite{Szego}
\[
 H_{2n}(\lambda)=L_n^{(-1/2)}(\lambda^2),\quad H_{2n+1}(\lambda)=\lambda L_n^{(1/2)}(\lambda^2),\quad
 n\in\dZ_+=\{0,1,2,\dots\}.
\]
It turns out that this construction, which unwraps the measure, can be generalized to the case of monic polynomials $P_j$ orthogonal with respect to a measure $
d\mu$ on $[0,+\infty)$~\cite{Car61},~\cite{DW63}. 
In other words, we can complete the system $\{P_j(\lambda^2)\}_{j=0}^{\infty}$ to a system of monic orthogonal polynomials $\Phi_j$ with the help of kernel polynomials $\widetilde{P}_j$,
i.e. the monic polynomials  
\[
\widetilde{P}_j(\lambda)=\frac{P_{j+1}(\lambda)-\frac{P_{j+1}(0)}{P_j(0)}P_{j}(\lambda)}{\lambda},
\]
which are orthogonal with respect to the measure $td\mu(t)$ on $[0,+\infty)$. Namely, the sequence $\Phi_j$ defined by the relation
\begin{equation}\label{ComplIntr}
 \Phi_{2j}(\lambda)=P_j(\lambda^2),\quad \Phi_{2j+1}(\lambda)=\lambda\widetilde{P}_j(\lambda^2),\quad j\in\dZ_+,
\end{equation}
is a sequence of orthogonal polynomials~\cite{Car61},~\cite{DW63}. Moreover, the measure of orthogonality for these polynomials is the following  measure
\[
 \frac{\sg t}{2}d\mu(t^2)
\]
on $\dR$~\cite{Chih64},~\cite{Si}.  This can be easily checked by recalling that the unwrapping transformation $P_j\mapsto\Phi_j$ allows to reduce a Stieltjes moment problem 
to a Hamburger one~\cite{Si},~\cite{Wall}. Actually, the transformation in question can be expressed in terms 
of the Cauchy-Stieltjes transform 
\[
 S(\lambda)=\int_{0}^{\infty}\frac{d\mu(t)}{t-\lambda}\in{\bf S}_{-\infty}
\]
of $d\mu$ as follows 
\[
 S(\lambda)\mapsto\lambda S(\lambda^2)=\int_{\dR}\frac{1}{t-\lambda}\frac{\sg t}{2}d\mu(t^2)\in{\bf N}_{-\infty}.
\]

Interestingly, iterations of the above simple construction can lead to orthogonal polynomials on several intervals. Moreover, fixed points of
the unwrapping transformation give rise to orthogonal polynomials on Cantor sets and have nontrivial applications in physics~\cite{BMM82}.
Furthermore, the idea of unwrapping measures generated a study of polynomial mappings~\cite{CI93},~\cite{CGI07},~\cite{GvanA88},~\cite{Peh03}.

Concerning the completion problem~\eqref{ComplIntr}, in~\cite{Chih64}, it is shown that it is not, in general, unique. More precisely,  if the measure $d\mu$ is supported on $[a,+\infty)$ 
then there are infinitely many ways to complete the system $\{P_j(\lambda^2)\}_{j=0}^{\infty}$ to a system 
of monic orthogonal polynomials. A construction of a family of completions to this problem was given in~\cite{Chih64}
by T. S. Chihara. The Chihara construction is the key ingredient to introduce and to study SDG maps~\cite{DVZ12},~\cite{DTVZh12}, which relate 
polynomials on the real line and the unit circle. 

The aim of this paper is twofold. The first goal is to give a matrix interpretation of the above constructions including the Chihara construction. 
The second purpose is to show that the generalized Jacobi matrices introduced and studied in~\cite{DD},~\cite{DD07},~\cite{De09},~\cite{DD10} 
implicitly appear in the theory of polynomial mappings. An introduction to the theory of generalized Jacobi matrices from the classical point of view is also given.

The paper is organized as follows. In Section~2, a preliminary information on the subclass ${\bf N}_{-\infty}$ of Nevanlinna functions is given.
In particular, the subclass ${\bf S}_{-\infty}(\subset{\bf N}_{-\infty})$ of Stieltjes functions is also discussed. The goal of the next section is to introduce
monic generalized Jacobi matrices associated with functions of the form $F(\lambda^2)$, where $F\in{\bf N}_{-\infty}$. By considering "symmetrized"
generalized Jacobi matrices, Section~4 demonstrates the difference between the generalized Jacobi matrices associated with $F(\lambda^2)$ and with $S(\lambda^2)$,
where $F\in{\bf N}_{-\infty}$ and $S\in{\bf S}_{-\infty}$. In Section~5, it is shown that the unwrapping transformation $S(\lambda)\mapsto\lambda S(\lambda^2)$,
$S\in{\bf S}_{-\infty}$, can be realized as a Darboux transformation. Finally, this matrix interpretation is extended in Section~6 to include the Chihara construction~\cite{Chih64}.

\section{The Stieltjes class and the unwrapping}
In this section we will give a brief description of the classical objects we are going to exploit in the present note. 

At first, recall that a Nevanlinna function is a 
function, holomorhic in the upper half-plane $\dC_+$, which maps $\dC_+$ into itself~\cite{Ach61},~\cite{KK74}. 
We will denote the class of all Nevanlinna functions by ${\mathbf N}$. Actually, in what follows we will only deal with Nevanlinna functions
 that admit the following asymptotic expansion
\begin{equation}\label{asymp}
F(\lambda)=-\frac{s_{0}}{\lambda}-\frac{s_{1}}{\lambda^2}
-\dots-\frac{s_{2n}}{\lambda^{2n+1}}+o\left(\frac{1}{\lambda^{2n+1}}\right),
\quad\lambda\widehat{\rightarrow }\infty,
\end{equation}
for all $n\in\dZ_+$, where  $s_j$, $j\in\dZ_+$, are some real numbers  (here $\lambda\widehat{\rightarrow }\infty$ means that $\lambda$
tends to $\infty$ nontangentially, i.e. inside the sector
$\varepsilon<\arg \lambda<\pi-\varepsilon$ for some $\varepsilon>0$).

As is known ~\cite{Ach61},~\cite{KK74},  all  Nevanlinna functions $F\in {\bf N}_{-\infty}$ 
can be represented as follows
\[
F(\lambda)=\int_{\dR}\frac{d\mu(t)}{t-\lambda},
\]
where $\mu$ is a bounded  measure supported on the real line $\dR$. Moreover, we have that
$s_j=\int_{\dR}t^jd\mu(t)$, $j\in\dZ_+$, are the moments of the measure $d\mu$~\cite{Ach61}.
Throughout this paper, we assume that {\it $d\mu$ is a nontrivial probability measure}, that is, 
$s_0=1$ and $\det(s_{j+k})_{j,k=0}^{n}>0$ for all $n\in\dZ_+$.

Once we have a measure with finite moments of all non-negative orders, it is possible to construct a sequence of orthogonal polynomials. It is well known that the sequence
$\{P_j\}_{j=0}^{\infty}$ of monic polynomials  orthogonal with
respect to $\mu$ satisfies a three-term recurrence
relation~\cite{Ach61},~\cite{JTh},~\cite{Wall}
\begin{equation}\label{recrelmonp}
\lambda P_j(\lambda) = P_{j+1}(\lambda) + b_jP_j(\lambda) + c_{j-1}P_{j-1}(\lambda),\quad j\in\dZ_+,
\end{equation}
with the initial conditions
\[
P_{-1}(\lambda)=0,\quad P_{0}(\lambda) =1,
\]
where $b_j\in\dR$ and $c_j>0$, $j\in\dZ_+$.
In fact, the relation~\eqref{recrelmonp} can be rewritten as follows
\begin{equation}\label{JacEigValPr}
Jp(\lambda)=\lambda p(\lambda),
\end{equation}
where  $p = (P_0, P_1, P_2,\dots)^{\top}$ and  $J$ is a semi-infinite tridiagonal matrix
of the form
\begin{equation}\label{monJac}
J=\begin{pmatrix}
{b}_{0}   & 1 &       &\\
{c}_{0}   &{b}_1    &{1}&\\
        &{c}_1    &{b}_{2} &\ddots\\
&       &\ddots &\ddots\\
\end{pmatrix}.
\end{equation}
The matrix $J$ is said to be {\it the monic Jacobi matrix associated with $\mu$} or, equivalently {\it with $F$}.
It should be stressed here that $F\in{\bf N}_{-\infty}$ uniquely determines $J$. However, $J$ can correspond to
infinitely many distinct functions of the class ${\bf N}_{-\infty}$~\cite{Ach61},~\cite{Wall}. In this case, by $F$ we simply understand 
a family of ${\bf N}_{-\infty}$-functions that generate the monic Jacobi matrix $J$.

There is another way to get to Jacobi matrices and three term recurrence relations. The starting point for this way is~\eqref{asymp}.
More precisely, the asymptotic expansion of $F\in{\bf N}_{-\infty}$ can be expanded into 
the following continued fraction~\cite{JTh},~\cite{Wall}
\begin{equation}\label{Jfraction}
F(\lambda)\sim-\frac{1}{\lambda-b_0-\displaystyle{\frac{c_0}
{\lambda-b_1-\displaystyle{\frac{c_1}{\ddots}}}}}=
-\frac{1}{\lambda-b_0}
\begin{array}{l} \\ -\end{array}
\frac{c_0}{\lambda-b_1}
\begin{array}{l} \\ -\end{array}
\frac{c_1}{\lambda-b_2}
\begin{array}{l}\\-\dots\end{array},
\end{equation}
which is called a $J$-fraction. At this point, it is clear how to construct the Jacobi matrix $J$~\eqref{monJac} from the $J$-fraction~\eqref{Jfraction}.
Besides, the polynomials $P_j$ appear in this scheme as denominator polynomials of the approximants to the $J$-fraction.
Potentially, this method is more convenient for generalizations since we can start with a generic asymptotic expansion (which is not
necessarily related to a positive measure) and try
to expand it into a good continued fraction (for instance, see~\cite{DD},~\cite{JTh},~\cite{KL79},~\cite{Wall}). 
Before going into details of such generalizations, let us  turn our attention to the Stieltjes functions which are basically the 
essence of the present study. 

We say that a function $S\in {\bf N}_{-\infty}$ is a Stieltjes function 
if it has the following integral representation
\begin{equation}\label{IntRofSFunction}
 S(\lambda)=\int_{0}^{\infty}\frac{d\mu(t)}{t-\lambda},
\end{equation}
i.e. the support of the measure is contained in or equal to $[0,\infty)$. The class of all such functions is denoted by ${\bf S}_{-\infty}$.

Evidently, every $S\in{\bf S}_{-\infty}$ has the following asymptotic expansion 
\begin{equation}\label{eq:asympS}
S(\lambda)-\frac{s_{0}}{\lambda}-\frac{s_{1}}{\lambda^2}
-\dots-\frac{s_{2n}}{\lambda^{2n+1}}+o\left(\frac{1}{\lambda^{2n+1}}\right),
\quad\lambda\widehat{\rightarrow }\infty,
\end{equation}
for all $n\in\dZ_+$ and the $J$-fraction expansion
\begin{equation}\label{JfracSt}
 S(\lambda)\sim-\frac{1}{\lambda-b_0}
\begin{array}{l} \\ -\end{array}
\frac{c_0}{\lambda-b_1}
\begin{array}{l} \\ -\end{array}
\frac{c_1}{\lambda-b_2}
\begin{array}{l} \\ -\end{array}
\frac{c_2}{\lambda-b_3}
\begin{array}{l}\\-\dots\end{array}.
\end{equation}
Remarkably, every Stieltjes function $S\in{\bf S}_{-\infty}$ can be also expanded into the Stieltjes fraction~\cite{JTh},~\cite{Wall}
\begin{equation}\label{StFraction1}
 S(\lambda)\sim
 -\frac{1}{\lambda} 
\begin{array}{l} \\ -\end{array}
\frac{d_1}{1}
\begin{array}{l} \\ -\end{array}
 \frac{d_2}{\lambda} 
\begin{array}{l} \\ -\end{array}
\frac{d_3}{1}
\begin{array}{l} \\ -\end{array}
 \frac{d_1}{\lambda} 
\begin{array}{l} \\ -\end{array}
\frac{d_1}{1}
\begin{array}{l}\\-\dots\end{array}
\end{equation}
which is shortly called an $S$-fraction.
The fact that all functions of the class ${\bf S}_{-\infty}$ generate two different continued fractions, could be puzzling (this is not true for the entire class ${\bf N}_{-\infty}$!).
However, the things become transparent as soon as one recalls another equivalent definition of the class 
${\bf S}_{-\infty}$: the function $F\in {\bf N}_{-\infty}$ belongs to ${\bf S}_{-\infty}$ if and only if
$\lambda F(\lambda^2)\in{\bf N}_{-\infty}$~\cite{KK74}. So, there are two functions $S(\lambda)$ and $\lambda S(\lambda^2)$ of
the class ${\bf N}_{-\infty}$ related to every function $S\in{\bf S}_{-\infty}$ and that is why $S$ has two continued fraction expansions. 
Indeed, if  $S$ is of the form~\eqref{IntRofSFunction} then one can easily check that
\begin{equation}\label{Smeasure}
 \lambda S(\lambda^2)=\int_{\dR}\frac{1}{t-\lambda}\frac{\sg t}{2}d\mu(t^2).
\end{equation}
Therefore, one can write
\[
 \lambda S(\lambda^2)\sim -\frac{1}{\lambda-\beta_0}
\begin{array}{l} \\ -\end{array}
\frac{\gamma_0}{\lambda-\beta_1}
\begin{array}{l} \\ -\end{array}
\frac{\gamma_1}{\lambda-\beta_2}
\begin{array}{l} \\ -\end{array}
\frac{\gamma_2}{\lambda-\beta_3}
\begin{array}{l}\\-\dots\end{array}.
\]
Further, we see from~\eqref{Smeasure} that the corresponding measure is symmetric. This fact implies that $\beta_j=0$, $j\in\dZ_+$ (see~\cite{Ach61},~\cite{Si},~\cite{Wall} for more details)
and thus we have
\[
 \lambda S(\lambda^2)\sim -\frac{1}{\lambda}
\begin{array}{l} \\ -\end{array}
\frac{\gamma_0}{\lambda}
\begin{array}{l} \\ -\end{array}
\frac{\gamma_1}{\lambda}
\begin{array}{l} \\ -\end{array}
\frac{\gamma_2}{\lambda}
\begin{array}{l} \\ -\end{array}
\frac{\gamma_3}{\lambda}
\begin{array}{l}\\-\dots\end{array}.
\]
Dividing the latter relation by $\lambda$ we get
\[
S(\lambda^2)\sim -\frac{1}{\lambda^2}
\begin{array}{l} \\ -\end{array}
\frac{\gamma_0}{1}
\begin{array}{l} \\ -\end{array}
\frac{\gamma_1}{\lambda^2}
\begin{array}{l} \\ -\end{array}
\frac{\gamma_2}{1}
\begin{array}{l} \\ -\end{array}
\frac{\gamma_3}{\lambda^2}
\begin{array}{l}\\-\dots\end{array},
\]
which becomes~\eqref{StFraction1} after the substitution $\lambda^2\to\lambda$. Actually, the transformation
\begin{equation}\label{UnwrapTr}
S(\lambda)\mapsto \lambda S(\lambda^2),
\end{equation}
which we will call the unwrapping by obvious reasons, is a way to reduce a Stieltjes moment problem
to a Hamburger moment problem~\cite[Section~87]{Wall} (see also~\cite[Theorem~2.13]{Si}). At the same time, this shows some importance of functions
of the form $S(\lambda^2)$ in the theory of $S$-fractions and $J$-fractions.

\section{Monic generalized Jacobi matrices}

Now we are in a position to discuss another class of asymptotic relations and the corresponding matrices. Namely, we are going 
to associate a block tridiagonal matrix with a function $\sF$ of the form
\begin{equation}\label{defFN}
 \sF(\lambda)=F(\lambda^2),
\end{equation}
where $F\in{\bf N}_{-\infty}$. Hence, the function $\sF$ admits the following asymptotic expansion
\begin{equation}\label{eq:asymp}
\sF(\lambda)=F(\lambda^2)\sim-\sum_{j=0}^\infty
\frac{s_j}{{\lambda}^{2j+2}},\quad\lambda\widehat{\rightarrow }\infty,
\end{equation}
where $\sim$ denotes the validity of~\eqref{asymp} for all $n\in\dZ_+$. Also, it will be useful to have all the powers in~\eqref{eq:asymp}
\begin{equation}\label{eq:asymp:defs}
\sF(\lambda)\sim-\sum_{j=0}^\infty
\frac{\mathfrak{s}_j}{{\lambda}^{j+1}},\quad\lambda\widehat{\rightarrow }\infty,
\end{equation}
where $\mathfrak{s_{2j}}=0$ and $\mathfrak{s}_{2j+1}=s_j$ for all $j\in\dZ_+$. As a matter of fact, the expansion
on the right hand side of~\eqref{eq:asymp:defs} can be easily expanded into a $P$-fraction (see~\cite{JTh} for the general definition of $P$-fractions).

\begin{prop}\label{GJM}
Let ${F}$ be a function of the class ${\bf N}_{-\infty}$. 
Then the function ${\mathfrak F}(\lambda)=F(\lambda^2)$ admits the expansion into the following continued fraction ($P$-fraction)
\begin{equation}\label{PfractionG}
-\frac{1}{\lambda^2-b_0}
\begin{array}{l} \\ -\end{array}
\frac{c_0}{\lambda^2-b_1}
\begin{array}{l} \\ -\end{array}
\frac{c_1}{\lambda^2-b_2}
\begin{array}{l}\\-\dots\end{array},
\end{equation}
where $c_j>0$ and $b_j\in\dR$ are the entries of the J-fraction~\eqref{Jfraction}.
 \end{prop}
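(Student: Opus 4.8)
The plan is to read off the $P$-fraction~\eqref{PfractionG} from the $J$-fraction~\eqref{Jfraction} of $F$ by the formal substitution $\lambda\mapsto\lambda^2$, and then to check that the continued fraction produced in this way is genuinely a $P$-fraction that \emph{corresponds} to $\mathfrak F$ in the sense of~\cite{JTh}; the uniqueness of such a $P$-fraction then finishes the proof.

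First I would unpack the symbol $\sim$ in~\eqref{Jfraction}. Write $A_n$ and $B_n=P_n$ for the $n$-th numerator and denominator of the $J$-fraction of $F$; then $\deg P_n=n$, $\deg A_n=n-1$, both $A_n$ and $P_n$ satisfy the three-term recurrence $X_n=(\lambda-b_{n-1})X_{n-1}-c_{n-2}X_{n-2}$ (with the usual initial conditions), and the $n$-th convergent approximates $F$ to the order
\[
F(\mu)-\frac{A_n(\mu)}{P_n(\mu)}=O\!\left(\mu^{-(2n+1)}\right),\qquad\mu\widehat{\rightarrow}\infty .
\]
Now perform the replacement $\mu=\lambda^2$: each partial denominator $\lambda-b_j$ of~\eqref{Jfraction} becomes $\lambda^2-b_j$ while the partial numerators are unchanged, and since the recurrence above is stable under this replacement the $n$-th numerator and denominator of the resulting continued fraction are exactly $A_n(\lambda^2)$ and $P_n(\lambda^2)$, the latter of degree $2n$. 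As every partial denominator $\lambda^2-b_j$ is a polynomial of degree $2\ge1$ and every partial numerator is a nonzero constant, the continued fraction~\eqref{PfractionG} is (equivalent to) a $P$-fraction. Finally, substituting $\mu=\lambda^2$ in the estimate above gives
\[
\mathfrak F(\lambda)-\frac{A_n(\lambda^2)}{P_n(\lambda^2)}=O\!\left(\lambda^{-(4n+2)}\right),\qquad\lambda\widehat{\rightarrow}\infty ,
\]
so, in view of the asymptotic expansion~\eqref{eq:asymp} of $\mathfrak F$, the convergents of~\eqref{PfractionG} reproduce the asymptotic series of $\mathfrak F$ with orders tending to infinity; that is, \eqref{PfractionG} corresponds to $\mathfrak F$. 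By uniqueness of the $P$-fraction attached to a prescribed asymptotic series~\cite{JTh}, \eqref{PfractionG} must be \emph{the} $P$-fraction of $\mathfrak F$, which is the assertion.

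The step I expect to require the most care is the order bookkeeping behind the last display, i.e.\ making sure that the substitution $\lambda\mapsto\lambda^2$ really \emph{commutes} with forming the continued-fraction expansion, so that the formal Laurent expansion of~\eqref{PfractionG} at $\infty$ is $F(\lambda^2)$ itself and not just a series agreeing with it through finitely many terms. Concretely: the denominators $P_n(\lambda^2)$ and $P_{n-1}(\lambda^2)$ of~\eqref{PfractionG} have degrees $2n$ and $2n-2$, and the determinant of consecutive numerator/denominator pairs is a nonzero constant, so consecutive convergents of~\eqref{PfractionG} differ by $O(\lambda^{-(4n-2)})$; summing the tail, the convergent of index $n$ of~\eqref{PfractionG} agrees with its own formal expansion to order $O(\lambda^{-(4n+2)})$, and comparison with the previous display then forces that expansion to equal $\mathfrak F$. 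Everything else is routine, and the positivity $c_j>0$ is used only to note that the partial numerators of~\eqref{PfractionG} do not vanish.
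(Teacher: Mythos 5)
Your argument is correct and follows exactly the route the paper takes: the paper's entire proof is the remark that the claim is ``straightforward from~\eqref{Jfraction} and~\eqref{defFN}'', i.e.\ the substitution $\lambda\mapsto\lambda^2$ in the $J$-fraction, which is precisely your construction. You have simply supplied the correspondence bookkeeping (convergents, orders of contact, uniqueness of the $P$-fraction) that the author leaves implicit.
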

 
 \begin{proof}
  The proof is straightforward from~\eqref{Jfraction} and~\eqref{defFN}.
 \end{proof}
Next, the basic theory of continued fractions~\cite{JTh} reads that  the denominators ${\mathfrak P}_j$ of the approximants to the fraction~\eqref{PfractionG} satisfy 
the three-term recurrence relation
\begin{equation}\label{defrecrelmonp}
\lambda^2 {\mathfrak P}_j(\lambda) =
{\mathfrak P}_{j+1}(\lambda) + b_j{\mathfrak P}_j(\lambda) + c_{j-1}{\mathfrak P}_{j-1}(\lambda),\quad j\in\dZ_+,
\end{equation}
with the initial conditions
\[
{\mathfrak P}_{-1}(\lambda)=0,\quad {\mathfrak P}_{0}(\lambda) =1,
\]
where $c_j>0$ and $b_j\in\dR$.
Obviously, $\deg {\mathfrak P}_j=2n$ and ${\mathfrak P}_j(\lambda)=P_j(\lambda^2)$,
where $P_j$ are monic orthogonal polynomials generated by $F$.

A natural desire coming from the classical setting is to represent~\eqref{defrecrelmonp} as an eigenvalue
problem for a matrix (i.e. to get a problem similar to~\eqref{JacEigValPr}). The main obstacle to do that is
the fact that~\eqref{defrecrelmonp} is not linear with respect to $\lambda$. So, let us linearize the 
system~\eqref{defrecrelmonp}. To this end, introduce a new family of polynomials:
\[
{\sT}_{2j}(\lambda)={\sP}_j(\lambda),\quad \sT_{2j+1}(\lambda)=\lambda\sP_j(\lambda),\quad
j\in\dZ_+.
\]
For the new sequence, \eqref{defrecrelmonp} takes the form
\begin{equation}\label{LinDef}
\begin{split}
\lambda\sT_{2j}(\lambda)=&\sT_{2j+1}(\lambda)\\
\lambda\sT_{2j+1}(\lambda)=&\sT_{2j+2}(\lambda)+b_j\sT_{2j}(\lambda)+c_{j-1}\sT_{2j-2}(\lambda).
\end{split}
\end{equation}
Now, the problem in question is linear and \eqref{defrecrelmonp} can be rewritten as follows
\[
\begin{pmatrix}
 \begin{tabular}{ l | c | r | c}
  $\begin{matrix}
  0& 1 \\
  b_0 & 0
 \end{matrix}$ & $\begin{matrix}
  0& 0 \\
  1 & 0
 \end{matrix}$ & $\begin{matrix}
  0& 0 \\
  0 & 0
 \end{matrix}$& $\dots$ \\
  \hline
  $\begin{matrix}
  0& 0 \\
  c_0 & 0
 \end{matrix}$  & $\begin{matrix}
  0& 1 \\
  b_1 & 0
 \end{matrix}$  & $\begin{matrix}
  0& 0 \\
  1 & 0
 \end{matrix}$& $\ddots$ \\
  \hline
  $\begin{matrix}
  0& 0 \\
  0 & 0
 \end{matrix}$ & $\begin{matrix}
  0& 0 \\
  c_1 & 0
 \end{matrix}$ & $\begin{matrix}
  0& 1 \\
  b_2 & 0
 \end{matrix}$& $\ddots$ \\
 \hline
 $\vdots$&$\ddots$&$\ddots$&$\ddots$
\end{tabular}
 \end{pmatrix}\begin{pmatrix}{\sT_0(\lambda)}\\{\sT_1(\lambda)}\\{\sT_2(\lambda)}\\{\sT_3(\lambda)}\\
 {\sT_4(\lambda)}\\{\sT_5(\lambda)}\\\vdots\end{pmatrix}=\lambda\begin{pmatrix}{\sT_0(\lambda)}
\\{\sT_1(\lambda)}\\{\sT_2(\lambda)}\\{\sT_3(\lambda)}\\
{\sT_4(\lambda)}\\{\sT_5(\lambda)}\\\vdots\end{pmatrix}.
\]
This brings us to the following concept.
\begin{defn}[\cite{DD},~\cite{DD10}, cf.~\cite{KL79}]\label{defGJM}
{\it A monic generalized Jacobi matrix} associated \linebreak
with the function ${\mathfrak F}$ is the tridiagonal block matrix
\begin{equation}\label{mJacobi}
\mathfrak{J}=\begin{pmatrix}
\mathfrak{B}_{0}   &\mathfrak{D}_{0}&       &\\
\mathfrak{C}_{0}   &\mathfrak{B}_1    &\mathfrak{D}_{1}&\\
        &\mathfrak{C}_1    &\mathfrak{B}_{2} &\ddots\\
&       &\ddots &\ddots\\
\end{pmatrix},
\end{equation}
where the entries are as follows 
\[
{\mathfrak B}_j=\begin{pmatrix} 0 & 1\\
              b_j & 0 \end{pmatrix}, 
\mathfrak{D}_{j}=
\begin{pmatrix}
0&0\\
1&0\\
\end{pmatrix},
\mathfrak{C}_{j}=
\begin{pmatrix}
0&0\\
{c}_j&0\\
\end{pmatrix}.
\]
\end{defn}

Actually, the monic generalized Jacobi matrix is constructed from the $P$-fraction \eqref{PfractionG}.
It turns out that one can associate a generalized Jacobi matrix of a similar type to any $P$-fraction (for more details see~\cite{De09,DD,DD07}).

Summing up we see that the $P$-fraction expansion of $\sF$ leads to the following eigenvector problem
\[
\sJ{\mathfrak t}(\lambda)=\lambda{\mathfrak t}(\lambda),
\]
where ${\mathfrak t}=(\sT_0,\sT_1,\dots)^{\top}$. Keeping in mind the classical case, one would expect some orthogonality 
of $\{\sT_j\}_{j=0}^{\infty}$ showing up.
%\subsection{Almost orthogonality} It is clear that the function $\sF$ admits the following asymptotic expansion
%\begin{equation}\label{asympFS}
%\sF(\lambda)\sim-\sum_{j=0}^\infty
%\frac{{\mathfrak s}_j}{{\lambda}^{j+1}}=
%-\sum_{j=0}^\infty
%\frac{{s}_j}{{\lambda}^{2j+2}},\quad\lambda\widehat{\rightarrow }\infty,
%\end{equation}
%where $s_j$ are the moments given by~\eqref{eq:asympS}, i.e. $s_j=\int_{0}^{\infty}t^jd\mu(t)$,
%where $d\mu$ is the measure corresponding to $S$. Unfortunately, as we have already seen, the moments
%${\mathfrak s}_j$ are not generally the moments of a probability measure or even a finite signed measure.
%Therefore, we have to 
Mimicking the classical case, let us consider a linear functional $\sS$ defined on the space $\cP$ of all polynomials in $\lambda$
in the following way
\[
\sS(\lambda^j)={\mathfrak s}_j,\quad j\in\dZ_+,
\]
where $\mathfrak{s}_j$ are defined by~\eqref{eq:asymp:defs}.
To begin with, observe that  $\sT_0$ is orthogonal to itself with respect to $\sS$, that is,
\begin{equation}\label{Ort1}
\sS(\sT_0(\lambda)\sT_0(\lambda))=\sS(1)={\mathfrak s}_0=0.
\end{equation}
This property demonstrates that $\sS$ is not a regular functional. In other words, $\det({\mathfrak s}_{i+j})_{i,j=0}^{k}$
can vanish for some $k\in\dZ_+$. Besides~\eqref{Ort1}, the system $\{\sT_j\}_{j=0}^{\infty}$
possesses the following orthogonality property.
\begin{thm}\label{AlmOrt}
We have that
\begin{equation}\label{Ort2}
\sS(\sT_{2j}(\lambda)\sT_k(\lambda))=\frac{D_j}{D_{j-1}}\delta_{2j+1 k}, \quad j,k\in\dZ_+,
\end{equation}
\begin{equation}\label{Ort3}
\sS(\sT_{2j+1}(\lambda)\sT_k(\lambda))=\frac{D_j}{D_{j-1}}\delta_{2j k}, \quad j,k\in\dZ_+,
\end{equation}
where $D_j=\det(s_{i+k})_{i,k=0}^{j}>0$, $D_{-1}=1$, $s_k$ are given by~\eqref{asymp}, and
$\delta_{i k}$ is the Kronecker delta.
\end{thm}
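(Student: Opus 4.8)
The plan is to translate the block-matrix structure of $\sJ$ into statements about the functional $\sS$, using the fact that $\sT_{2j}(\lambda)=\sP_j(\lambda)=P_j(\lambda^2)$ and $\sT_{2j+1}(\lambda)=\lambda P_j(\lambda^2)$ together with the known orthogonality of the $P_j$ with respect to the Hamburger/Stieltjes functional. First I would recall that $\sS(\lambda^{2i+1})=s_i$ and $\sS(\lambda^{2i})=0$, so for any two polynomials $Q,R$ in $\lambda$ one has the key splitting: if $Q(\lambda)R(\lambda)$ is written in powers of $\lambda$, only the odd powers contribute to $\sS(QR)$. In particular, if $Q$ and $R$ are both ``even'' (functions of $\lambda^2$) or both ``odd'' ($\lambda$ times a function of $\lambda^2$), their product is even and $\sS(QR)$ picks up only the odd-power part — which is zero for an even polynomial, hence those products vanish. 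The nonzero pairings occur precisely when one factor is even and the other is odd.

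Concretely, write $\ell$ for the classical moment functional with $\ell(\lambda^i)=s_i$, so that $\ell(P_j P_k)=\frac{D_j}{D_{j-1}}\delta_{jk}$ by the standard theory of monic orthogonal polynomials (using $D_j=\det(s_{i+k})_{i,k=0}^j$, $D_{-1}=1$). I would then establish the bookkeeping identity
\begin{equation}\label{plan:key}
\sS\bigl(A(\lambda^2)\cdot \lambda B(\lambda^2)\bigr)=\ell\bigl(A(\lambda)B(\lambda)\bigr),\quad \sS\bigl(A(\lambda^2)B(\lambda^2)\bigr)=0=\sS\bigl(\lambda A(\lambda^2)\cdot\lambda B(\lambda^2)\bigr),
\end{equation}
for arbitrary polynomials $A,B$: the first follows because $A(\lambda^2)\lambda B(\lambda^2)=\lambda\,(AB)(\lambda^2)=\sum_i (AB)_i\lambda^{2i+1}$, so $\sS$ of it equals $\sum_i (AB)_i s_i=\ell(AB)$; the other two are even polynomials, which $\sS$ annihilates.

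Then the theorem is immediate by cases on the parity of $k$. For \eqref{Ort2}: $\sS(\sT_{2j}\sT_k)=\sS(P_j(\lambda^2)\sT_k(\lambda))$; if $k=2m$ this is $\sS(P_j(\lambda^2)P_m(\lambda^2))=0$ by \eqref{plan:key}, and if $k=2m+1$ this is $\sS(P_j(\lambda^2)\,\lambda P_m(\lambda^2))=\ell(P_jP_m)=\frac{D_j}{D_{j-1}}\delta_{jm}$, which is exactly $\frac{D_j}{D_{j-1}}\delta_{2j+1\,k}$. For \eqref{Ort3}: $\sS(\sT_{2j+1}\sT_k)=\sS(\lambda P_j(\lambda^2)\sT_k(\lambda))$; if $k=2m+1$ it is $\sS(\lambda P_j(\lambda^2)\cdot\lambda P_m(\lambda^2))=0$, and if $k=2m$ it is $\sS(\lambda P_j(\lambda^2)P_m(\lambda^2))=\ell(P_jP_m)=\frac{D_j}{D_{j-1}}\delta_{jm}=\frac{D_j}{D_{j-1}}\delta_{2j\,k}$. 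Positivity $D_j>0$ is the standing assumption on $d\mu$.

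The only genuine point requiring care — and what I'd flag as the main obstacle, though it is minor — is being precise about what ``orthogonality of the $P_j$'' means here: the functional $\ell$ is the Hamburger functional attached to the moments $s_i$, i.e. the one for which \eqref{recrelmonp} holds, and one must confirm $\ell(P_jP_k)=\tfrac{D_j}{D_{j-1}}\delta_{jk}$ with exactly these determinant normalizations (this is the classical formula $\ell(P_j^2)=D_j/D_{j-1}$). Everything else is the parity/substitution bookkeeping in \eqref{plan:key}, which is routine once stated. If one prefers to avoid invoking $\ell$ explicitly, the same computation can be phrased directly: expand $\sT_{2j}(\lambda)\sT_k(\lambda)$ or $\sT_{2j+1}(\lambda)\sT_k(\lambda)$ in powers of $\lambda$, discard even powers, and recognize the surviving sum as the classical inner product of $P_j$ with $P_m$.
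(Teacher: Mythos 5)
Your proof is correct and follows essentially the same route as the paper: both arguments rest on the parity observation that $\sS$ annihilates even powers of $\lambda$ and sends $\lambda^{2i+1}\mapsto s_i$, thereby reducing the claimed relations to the classical orthogonality of the $P_j$ with respect to the moment functional $\ell(\lambda^i)=s_i$. The only cosmetic difference is that you invoke the standard normalization $\ell(P_jP_m)=\tfrac{D_j}{D_{j-1}}\delta_{jm}$ as a known fact, whereas the paper rederives it in-line by testing $\sT_{2j}$ against the monomials $\lambda^{2l+1}$, $l\le j$, via the determinant representation of $P_j$.
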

\begin{proof} Recall that the monic orthogonal polynomials corresponding to $F$ can be calculated
by the formula
\[
P_j(\lambda)=\frac{1}{D_{j-1}}\begin{vmatrix}
s_0&s_1&\dots&s_j\\
\hdotsfor{4}\\
s_{j-1}&s_j&\dots& s_{2j-1}\\
1&\lambda^1&\dots&\lambda^j\\
\end{vmatrix}, \quad j\in\dZ_+.
\]
We begin the proof of~\eqref{Ort2} and~\eqref{Ort3} by noticing that
\[
\sS(\sT_{2j}(\lambda)\lambda^{2l})=0,\quad l\in\dZ_+.
\]
The latter relation is true because $\sT_{2j}(\lambda)=P_j(\lambda^2)$ and $\sS(\lambda^{2r})={\mathfrak s}_{2r}=0$ for
$r=0,1,\dots,2l$. Thus, we have just proved~\eqref{Ort2} for $k=2l$ and~\eqref{Ort3} for $k=2l+1$.
The rest follows from the equality
\[
\sS(\sT_{2j}(\lambda)\lambda^{2l+1})=\frac{1}{D_{j-1}}
\begin{vmatrix}
s_0&s_1&\dots&s_j\\
\hdotsfor{4}\\
s_{j-1}&s_j&\dots& s_{2j-1}\\
s_l&s_{l+1}&\dots&s_{j+l}\\
\end{vmatrix}
\]
for $l=0,1,\dots j$.
\end{proof}

Evidently, one cannot say that the sequence $\{\sT_j\}_{j=0}^{\infty}$ is orthogonal in the conventional sense.
However, \eqref{Ort2} and~\eqref{Ort3} show that the family $\{\sT_j\}_{j=0}^{\infty}$ is bi-orthogonal to 
itself with respect to $\sS$. M.G.~Krein and H. Langer proposed to call such systems almost orthogonal in their paper about indefinite analogs of the Hamburger and 
Stieltjes moment problems~\cite{KL79} and some earlier work. At the same time, the polynomials $\sT_j$ resemble skew orthogonal polynomials~\cite{AFNvM00}
and inherit many of their algebraic properties.

\begin{remark}
 From the proof of Theorem~\ref{AlmOrt} we can also get the following relations
 \begin{equation}\label{indOrt1}
 \begin{split}
  \sS(\sT_{2j}(\lambda)\lambda^k)&=\frac{D_j}{D_{j-1}}\delta_{2j+1 k}, \quad k=0,1,\dots, 2j+1,\\
  \sS(\sT_{2j+1}(\lambda)\lambda^k)&=\frac{D_j}{D_{j-1}}\delta_{2j k}, \quad k=0,1,\dots, 2j.
  \end{split}
 \end{equation}
 It should be mentioned that the first formula in~\eqref{indOrt1} is a special case of~\cite[Proposition 3.2]{Peh03}.
\end{remark}

Particularly, the results of this section are valid for functions of the form $S(\lambda^2)$. To analyze these functions we will need the following concept. 

\begin{defn}[\cite{DD09},~\cite{J2000}]\label{deffunct+}
Let us say that a function ${\mathfrak F}$ is definitazable by the polynomial $p(\lambda)=\lambda$ if it admits the
asymptotic expansion
\begin{equation}\label{asympF}
{\mathfrak F}(\lambda)\sim-\frac{{\mathfrak
s}_{0}}{\lambda}-\frac{{\mathfrak
s}_{1}}{\lambda^{2}}-\dots-\frac{{\mathfrak s}_{2n}}{\lambda^{2n+1}}
-\dots,\quad\lambda\widehat{\rightarrow }\infty,
\end{equation}
with some ${\mathfrak s}_j\in\dR$, $j\in\dZ_+$, and
$\lambda{\mathfrak F}(\lambda)+{\mathfrak s}_{0}\in{\bf
N}_{-\infty}$.
\end{defn}
Definitazibility is a very important concept in the theory of operators in Krein spaces~\cite{J2000},~\cite{L1982}.
Generally, the concept means that a definitazable object, which acts in a Krein space, can be made  definite (related to a Hilbert space) 
by means of some transformations. It should be also mentioned that the definizability by polynomials was also used in~\cite{GvanA88} and in~\cite{Peh03}.

In view of~\eqref{Smeasure},  the function
\begin{equation}\label{DefSt}
 {\sZ}(\lambda):=S(\lambda^2),
\end{equation}
where $S\in{\bf S}_{-\infty}$, is a definitizable function. It could seem from~\eqref{Smeasure} that ${\sZ}$ is the Cauchy transform of a finite signed measure.
 This can be easily disproved by considering the simplest Stieltjes function $S_0(\lambda)=-1/\lambda$. Then 
 \[
  {\sZ}_0(\lambda)=S_0(\lambda^2)=-\frac{1}{\lambda^2},
 \]
which cannot be the Cauchy transform of a finite signed measure because it is the Cauchy transform of the first derivative $\delta'_{\{0\}}$ of the Dirac delta function. 
However, for example, if the support of the measure representing the original
function $S$ doesn't contain $0$ then ${\sZ}$ is the Cauchy transform of a signed measure.

\section{A brief tour of the self-adjoint case}

This section demonstrates the difference between the generalized Jacobi matrices associated with $\sF$ and with $\sZ$.
To accomplish that, we have to start by considering the symmetric Jacobi matrix 
\[
 J_s=\begin{pmatrix}
{b}_{0}   & \sqrt{{c}_{0}} &       &\\
\sqrt{{c}_{0}}   &{b}_1    &\sqrt{{c}_{1}}&\\
        &\sqrt{{c}_{1}}    &{b}_{2} &\ddots\\
&       &\ddots &\ddots\\
\end{pmatrix}
\]
corresponding to the function $F$.
As a matter of fact, to realize the principal contrast between the matrices generated by $\sF$ and by $\sZ$, it is enough
to see it for bounded operators. So, {\it in this section we can assume that $J_s$ is bounded in $\ell^2$}.
In other words, we restrict ourselves to the case of the measure $d\mu$ with a bounded support.

Clearly, one can repeat the construction of Section~2 in the symmetric case and that gives the generalized Jacobi matrix
\[
 \sJ_s=\begin{pmatrix}
 \begin{tabular}{ l | c | r | c}
  $\begin{matrix}
  0& 1 \\
  b_0 & 0
 \end{matrix}$ & $\begin{matrix}
  0& 0 \\
   \sqrt{{c}_{0}} & 0
 \end{matrix}$ & $\begin{matrix}
  0& 0 \\
  0 & 0
 \end{matrix}$& $\dots$ \\
  \hline
  $\begin{matrix}
  0& 0 \\
  \sqrt{{c}_{0}} & 0
 \end{matrix}$  & $\begin{matrix}
  0& 1 \\
  b_1 & 0
 \end{matrix}$  & $\begin{matrix}
  0& 0 \\
  \sqrt{{c}_{1}} & 0
 \end{matrix}$& $\ddots$ \\
  \hline
  $\begin{matrix}
  0& 0 \\
  0 & 0
 \end{matrix}$ & $\begin{matrix}
  0& 0 \\
  \sqrt{{c}_{1}} & 0
 \end{matrix}$ & $\begin{matrix}
  0& 1 \\
  b_2 & 0
 \end{matrix}$& $\ddots$ \\
 \hline
 $\vdots$&$\ddots$&$\ddots$&$\ddots$
\end{tabular}
 \end{pmatrix}.
\]
Evidently, $\sJ_s$ is not symmetric. However, the relations~\eqref{Ort2} and~\eqref{Ort3} suggest that we should consider the operator $\sJ_s$
in a different space. Namely, introduce the following inner product
\[
 [x,y]=(Gx,y),
\]
where $(\cdot,\cdot)$ is the usual $\ell^2$-inner product and the Gram matrix $G$ is defined as follows
\[
 G=\diag(G_0,G_1,G_2,\dots),\quad G_j=\begin{pmatrix}
                                       0&1\\
                                       1&0
                                      \end{pmatrix},
 \quad j\in\dZ_+.
\]
It should be stressed here that $[\cdot,\cdot]$ is not positive definite:
\[
 [e_0,e_0]=(Ge_0,e_0)=0, \quad e_0=(1,0,0,0,\dots)^{\top}.
\]
Though it is not pleasant to work with indefinite inner products, the choice is dictated by the fact that we want the multiplication operator on
$\cP$ provided with the inner product defined by $\sS$ to become the generalized Jacobi matrix $\sJ_s$ acting on the space $\ell^2(G)$ of $\ell^2$-vectors
equipped with  $[x,y]=(Gx,y)$ (see~\cite{DD},~\cite{KL79}). Notice the matrix $G$ satisfies the following conditions
\[
 G=G^*,\quad G^2=I.
\]
Thus, the space $\ell^2(G)$ is a Krein space (for example, see~\cite{L1982}). By definition, the norm of a Krein space
is the same as the norm of the Hilbert space generating the Krein space~\cite{L1982}. In particular, the norm of the Krein space $\ell^2(G)$ is 
the norm of the Hilbert space $\ell^2$.
\begin{prop}The generalized Jacobi matrix $\sJ_s$ associated with $\sF$ defined by~\eqref{defFN} is a bounded self-adjoint operator in
$\ell^2(G)$, that is
\[
 [\sJ_sx,y]=[x,\sJ_sy],\quad x,y\in\ell^2(G).
\]
Moreover, the spectrum $\sigma(\sJ_s)$ of $\sJ_s$ can be found by the formula
\begin{equation}\label{spform}
 \sigma(\sJ_s)=\{\lambda\in\dC: \lambda^2\in\sigma(J_s)\}\subset\dR\cup i\dR,
\end{equation}
 where  $\sigma(J_s)$ denotes the spectrum of $J_s$ acting in the Hilbert space $\ell^2$.
\end{prop}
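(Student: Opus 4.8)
The plan is to prove the two assertions separately: first the self-adjointness of $\sJ_s$ in $\ell^2(G)$, then the spectral mapping formula~\eqref{spform}.

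For the self-adjointness, I would argue purely at the level of the block entries. Since $\sJ_s$ is a bounded tridiagonal block operator (boundedness of $J_s$ gives $\sup_j|b_j|<\infty$ and $\sup_j c_j<\infty$, hence the blocks ${\mathfrak B}_j,{\mathfrak D}_j,{\mathfrak C}_j$ are uniformly bounded and the band structure keeps $\sJ_s$ bounded on $\ell^2$), it suffices to check the formal symmetry identity $G\sJ_s=(\sJ_s)^*G=(\sJ_s)^\top G$ entrywise. Because $G=\diag(G_0,G_1,\dots)$ with each $G_j=\begin{pmatrix}0&1\\1&0\end{pmatrix}$, this reduces to the three block relations $G_j{\mathfrak B}_j=({\mathfrak B}_j)^\top G_j$, $G_{j+1}{\mathfrak C}_j=({\mathfrak D}_j)^\top G_j$, and $G_j{\mathfrak D}_j=({\mathfrak C}_j)^\top G_{j+1}$. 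Each is a direct $2\times2$ computation: e.g. $G_j{\mathfrak B}_j=\begin{pmatrix}b_j&0\\0&1\end{pmatrix}=({\mathfrak B}_j)^\top G_j$, and similarly the $\mathfrak D$--$\mathfrak C$ pair swaps consistently. Since $G=G^*$ and $G^2=I$, the identity $G\sJ_s=(\sJ_s)^*G$ then says exactly $[\sJ_s x,y]=(G\sJ_s x,y)=(x,G\sJ_s y)=[x,\sJ_s y]$, i.e. $\sJ_s$ is self-adjoint in the Krein space $\ell^2(G)$.

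For the spectral formula, the key observation is the algebraic relation between $\sJ_s$ and $J_s$. I would compute $\sJ_s^2$ in block form: squaring the block-tridiagonal matrix with the given $2\times2$ entries, one finds that $\sJ_s^2$ is block diagonal (or rather decomposes the index set into even and odd positions) and acts as $J_s$ on each half. Concretely, using ${\mathfrak t}=(\sT_0,\sT_1,\dots)^\top$ with $\sT_{2j}=\sP_j$, $\sT_{2j+1}=\lambda\sP_j$, the eigenvector equation $\sJ_s{\mathfrak t}=\lambda{\mathfrak t}$ combined with~\eqref{LinDef} (in the symmetric normalization) gives $\lambda^2\sT_{2j}=\sum$ of neighbours, which is precisely the $J_s$-recurrence in the variable $\lambda^2$ acting on the subsequence $(\sT_0,\sT_2,\sT_4,\dots)$; likewise for the odd-indexed subsequence. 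Thus on $\ell^2$ one has a unitary block rearrangement conjugating $\sJ_s^2$ into $J_s\oplus J_s$ (the odd-index copy being unitarily equivalent to $J_s$ as well, via the $\mathfrak D,\mathfrak C$ intertwining). Hence $\sigma(\sJ_s^2)=\sigma(J_s)$, and since $z\in\sigma(\sJ_s)\iff z^2\in\sigma(\sJ_s^2)=\sigma(J_s)$ by the spectral mapping theorem for the polynomial $p(z)=z^2$ applied to the bounded operator $\sJ_s$, we obtain $\sigma(\sJ_s)=\{\lambda\in\dC:\lambda^2\in\sigma(J_s)\}$. Finally, $\sigma(J_s)\subset\dR$ because $J_s$ is bounded self-adjoint in the Hilbert space $\ell^2$; taking square roots of a nonnegative real gives a real $\lambda$, and of a negative real gives a purely imaginary $\lambda$, so $\sigma(\sJ_s)\subset\dR\cup i\dR$.

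The main obstacle I anticipate is bookkeeping rather than conceptual: one must verify carefully that the odd-indexed subspace really is unitarily (not merely similarly) equivalent to $J_s$ under the chosen Hilbert-space norm, and that the even/odd splitting of $\sJ_s^2$ is genuine (no cross terms), which requires tracking how the off-diagonal blocks ${\mathfrak D}_j$ and ${\mathfrak C}_j$ — which are \emph{not} square-root-symmetric individually but whose product pattern is — combine when the matrix is squared. A clean way to handle this is to introduce the unitary $U:\ell^2\to\ell^2\oplus\ell^2$ sending ${\mathfrak t}$ to its even and odd subsequences, check $U\sJ_s^2 U^{-1}=J_s\oplus J_s$ directly from~\eqref{LinDef}, and then invoke the spectral mapping theorem; the Krein-space self-adjointness established in the first part guarantees $\sigma(\sJ_s)\subset\dR\cup i\dR$ is consistent (real-spectrum-up-to-conjugation behaviour) but is not needed for the set equality itself.
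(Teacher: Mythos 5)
Your first part (self-adjointness) is essentially the paper's own argument: the paper likewise verifies that $G\sJ_s$ is a symmetric matrix, and your block-by-block reduction $G_j{\mathfrak B}_j=({\mathfrak B}_j)^\top G_j$, $G_{j+1}{\mathfrak C}_j=({\mathfrak D}_j)^\top G_j$, etc., is a correct way to organize that computation. Your identification of $\sJ_s^2$ is also correct: squaring the block-tridiagonal matrix gives diagonal blocks ${\mathfrak B}_j^2=b_jI_2$ (the products of the off-diagonal blocks with each other vanish) and off-diagonal blocks $\sqrt{c_j}\,I_2$, so $\sJ_s^2$ is $J_s\otimes I_2$, i.e.\ $J_s\oplus J_s$ after the even/odd rearrangement, whence $\sigma(\sJ_s^2)=\sigma(J_s)$. (The paper instead argues via the linearization: $\sJ_s-\lambda I$ is invertible iff $J_s-\lambda^2I$ is, which delivers the set equality in~\eqref{spform} in one stroke.)

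The gap is in the step ``$z\in\sigma(\sJ_s)\iff z^2\in\sigma(\sJ_s^2)$ by the spectral mapping theorem.'' The spectral mapping theorem gives $\sigma(\sJ_s^2)=\{\mu^2:\mu\in\sigma(\sJ_s)\}$, hence only the forward implication, i.e.\ only the inclusion $\sigma(\sJ_s)\subset\{\lambda:\lambda^2\in\sigma(J_s)\}$. For the reverse inclusion you know only that $\lambda^2=\mu^2$ for \emph{some} $\mu\in\sigma(\sJ_s)$, so $\lambda=\pm\mu$, and you must still rule out that only one of the two square roots lies in the spectrum; for a generic bounded operator $A$, $z^2\in\sigma(A^2)$ does not force $z\in\sigma(A)$ (take $A=I$, $z=-1$). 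The missing ingredient is the symmetry $\sigma(\sJ_s)=-\sigma(\sJ_s)$, which does hold here: every nonzero entry of $\sJ_s$ couples a coordinate of even index to one of odd index, so with $D=\diag(1,-1,1,-1,\dots)$ one has $D\sJ_s D^{-1}=-\sJ_s$. Adding that one observation closes your argument; without it the equality in~\eqref{spform}, as opposed to one inclusion, is not proved.
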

\begin{proof}
The boundedness follows from the boundedness of $J_s$, which we assume in this section.
Next, one sees that the matrix
\[
 G\sJ_s=
 \begin{pmatrix}
 \begin{tabular}{ l | c | r | c}
  $\begin{matrix}
  b_0& 0 \\
  0 & 1
 \end{matrix}$ & $\begin{matrix}
   \sqrt{{c}_{0}}& 0 \\
  0 & 0
 \end{matrix}$ & $\begin{matrix}
  0& 0 \\
  0 & 0
 \end{matrix}$& $\dots$ \\
  \hline
  $\begin{matrix}
   \sqrt{{c}_{0}}& 0 \\
  0 & 0
 \end{matrix}$  & $\begin{matrix}
  b_1& 0 \\
  0 & 1
 \end{matrix}$  & $\begin{matrix}
  \sqrt{{c}_{1}}& 0 \\
   0& 0
 \end{matrix}$& $\ddots$ \\
  \hline
  $\begin{matrix}
  0& 0 \\
  0 & 0
 \end{matrix}$ & $\begin{matrix}
  \sqrt{{c}_{1}}& 0 \\
  0 & 0
 \end{matrix}$ & $\begin{matrix}
  b_2& 0 \\
  0 & 1
 \end{matrix}$& $\ddots$ \\
 \hline
 $\vdots$&$\ddots$&$\ddots$&$\ddots$
\end{tabular}
 \end{pmatrix}
\]
is symmetric and this proves the self-adjointness. Finally, formula~\eqref{spform} is true because $\sJ_s$ is
a linearization of the problem $(J_s-\lambda^2I)x=y$, $x,y\in\ell^2$ (see also~\cite[Theorem~5.3]{De09} for a more strict statement). 
\end{proof}

\begin{rem} For the bounded Jacobi matrix $J_s$, it is well known that its $m$-function $m_{J_s}(\lambda)=((J_s-\lambda I)^{-1}e_0,e_0)$
coincides with $F$~\cite{Ach61},~\cite{Si},~\cite{Wall}. It appears that the same holds for bounded generalized Jacobi matrices~\cite{De09}. Namely, applying~\cite[Proposition~2.8]{DD}
to $\sJ_s^{\top}$ we arrive at
\[
m_{\sJ_s}(\lambda)=[(\sJ_s^{\top}-\lambda I)^{-1}e_0,e_0]=\sF(\lambda)=F(\lambda^2), \quad \lambda\in\dC\setminus\sigma(\sJ_s). 
\]
\end{rem}

Actually, knowing that an operator is self-adjoint in a Krein space doesn't say much because the spectrum of a self-adjoint operator
in a Krein space can be fairly arbitrary and the structure of the operator can be rather wild~\cite{L1982}.
Therefore, it makes sense to study narrower classes of operators in Krein spaces. For example, one of such classes
is the class of definitizable operators. In fact, the spectral calculus for definitizable 
operators is constructed~\cite{L1982}.

\begin{defn}[\cite{L1982}] A bounded self-adjoint operator $A$ in the Krein space $\ell^2(G)$ is called
definitizable if there exists a polynomial $p$ such that
\[
 [p(A)x,x]\ge 0, \quad x\in\ell^2(G).
\]
\end{defn}
Now, we are in a position to formulate the following result.
\begin{thm}
The generalized Jacobi matrix $\sJ_s$ associated with $\sZ$ given by~\eqref{DefSt} is non-negative in $\ell^2(G)$, i.e.
\[
 [\sJ_sx,x]\ge 0, \quad x\in\ell^2(G),
\]
and, thus, $\sJ_s$ is definitizable by the polynomial $p(\lambda)=\lambda$. Furthermore, we have that
\[
 \sigma(\sJ_s)=\{\lambda\in\dC: \lambda^2\in\sigma(J_s)\}\subset\dR.
\]
\end{thm}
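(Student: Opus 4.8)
The plan is to reduce everything to the self-adjoint Jacobi matrix $J_s$ attached to $S\in{\bf S}_{-\infty}$ and exploit the fact that, for a Stieltjes function, $J_s$ is a non-negative operator in $\ell^2$. First I would recall the standard fact (see \cite{Ach61},~\cite{Si},~\cite{Wall}) that $S\in{\bf S}_{-\infty}$ if and only if the underlying measure $d\mu$ is supported in $[0,+\infty)$, which in turn is equivalent to $J_s\ge 0$ in the Hilbert space $\ell^2$, i.e. $(J_sx,x)\ge0$ for all $x\in\ell^2$; equivalently $\sigma(J_s)\subset[0,+\infty)$. (In the bounded case we are in here, this is immediate from the integral representation $(J_s x,x)=\int_{0}^{\infty}t\,|\hat x(t)|^2\,d\mu(t)\ge0$.)

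Next I would compute $[\sJ_s x,x]=(G\sJ_s x,x)$ using the explicit form of $G\sJ_s$ displayed just above. Writing $x=(x^{(0)},x^{(1)},x^{(2)},\dots)^{\top}$ with $x^{(j)}=(x^{(j)}_1,x^{(j)}_2)^{\top}\in\dC^2$, the block structure of $G\sJ_s$ shows that the quadratic form splits: the entries $b_j$ and $\sqrt{c_j}$ act only on the first components $\{x^{(j)}_1\}_{j}$, producing exactly the Jacobi quadratic form $(J_s\xi,\xi)$ with $\xi=(x^{(0)}_1,x^{(1)}_1,\dots)$, while the $1$'s on the diagonal contribute $\sum_j |x^{(j)}_2|^2=\|\eta\|^2\ge0$ with $\eta=(x^{(0)}_2,x^{(1)}_2,\dots)$. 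Hence
\[
 [\sJ_s x,x]=(J_s\xi,\xi)+\|\eta\|^2\ge 0
\]
for every $x\in\ell^2(G)$, since $J_s\ge0$ when $S$ is Stieltjes. This proves non-negativity, and therefore definitizability with $p(\lambda)=\lambda$.

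For the spectral statement, the inclusion $\sigma(\sJ_s)=\{\lambda\in\dC:\lambda^2\in\sigma(J_s)\}$ is already established in the preceding Proposition (formula~\eqref{spform}), valid for any $F\in{\bf N}_{-\infty}$ and in particular for $\sZ=S(\lambda^2)$. It remains only to sharpen the conclusion from $\dR\cup i\dR$ to $\dR$: since $S\in{\bf S}_{-\infty}$, we have $\sigma(J_s)\subset[0,+\infty)$, so $\lambda^2\in\sigma(J_s)$ forces $\lambda^2\ge0$, i.e. $\lambda\in\dR$; the purely imaginary branch is empty. This gives $\sigma(\sJ_s)\subset\dR$, completing the proof.

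The only genuinely delicate point is the first step — identifying $[\sJ_s\,\cdot\,,\cdot]$ with the Jacobi form $(J_s\,\cdot\,,\cdot)$ up to an extra non-negative term. One must be careful that the $G$-twist really rearranges the blocks so that the "$b_j,\sqrt{c_j}$" part reassembles into $J_s$ acting on the odd-indexed coordinates $\sT_{2j}$ (cf.\ the orthogonality relations~\eqref{Ort2},~\eqref{Ort3}) and the remaining diagonal $1$'s never couple to those coordinates; the block bookkeeping in $G\sJ_s$ shown above makes this transparent, and everything else is the standard Stieltjes $\Leftrightarrow$ positivity dictionary together with the already-proved spectral mapping~\eqref{spform}.
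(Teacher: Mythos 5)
Your proof is correct and takes essentially the same route as the paper: the paper observes that $G\sJ_s$ is unitarily equivalent (via the permutation separating first and second block components) to $\diag(J_s,I)$, which is precisely your identity $[\sJ_s x,x]=(J_s\xi,\xi)+\|\eta\|^2$, and both arguments then combine the non-negativity of $J_s$ for Stieltjes functions with the spectral mapping formula~\eqref{spform}.
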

\begin{proof}
It is easy to see that 
\[
 G\sJ_s\cong \begin{pmatrix}
                J_c&0\\
                0&I
               \end{pmatrix},
\]
where $\cong$ denotes the unitary equivalence in the Hilbert space $\ell^2$.  Hence, the non-negativity of
$\sJ_s$ follows from the non-negativity of $J_s$ corresponding to the Stieltjes function (the statement about the non-negativity of
$J_s$ can be found in~\cite{Si}).
Then, it remains to apply~\eqref{spform} taking into account that $J_s$ is non-negative.
\end{proof}

\section{The unwrapping as a Darboux transformation}

The unwrapping transformation
\begin{equation}\label{UnwrTr1}
 {\bf S}_{-\infty}\ni S(\lambda)\mapsto\lambda S(\lambda^2)\in{\bf N}_{-\infty}
\end{equation}
plays an important role not only in mathematics~\cite{Si},~\cite{Wall} but also in physics~\cite{BMM82}. 
It also gave rise to the study of polynomial mappings~\cite{CI93},~\cite{CGI07},~\cite{GvanA88},~\cite{Peh03}.
The goal of  this section is to give a matrix interpretation of the transformation~\eqref{UnwrTr1}. Namely,
we split~\eqref{UnwrTr1} into two steps: 
\[
 {\bf S}_{-\infty}\ni S(\lambda)\mapsto S(\lambda^2)=\sZ(\lambda),\quad \sZ(\lambda)\mapsto\lambda S(\lambda^2)\in{\bf N}_{-\infty}.
\]
From the point of view of Jacobi matrices, the first step was considered in Section~2. Now we are going to interpret the second one. To be more precise, 
it will be shown that the second transformation is essentially a Darboux transformation (strictly speaking, a Christoffel transformation~\cite{SpZh95},~\cite{Zh97}) 
of the underlying monic generalized Jacobi matrix obtained after the first step.

We start by recalling the well-known facts about the Darboux transformations of monic Jacobi matrices~\cite{BM04}. 
Let $S$ belong to ${\bf S}_{-\infty}$, that is,
\[
 S(\lambda)=\int_{0}^{\infty}\frac{d\mu(t)}{t-\lambda}.
\] 
Then the corresponding monic orthogonal polynomials obey 
\begin{equation}\label{Nonvan}
 (-1)^jP_j(0)>0, \quad j\in\dZ_+.
\end{equation}
Indeed, we have that 
\[
 P_j(0)=\frac{1}{D_{j-1}}
 \begin{vmatrix}
s_0&s_1&\dots&s_j\\
\hdotsfor{4}\\
s_{j-1}&s_j&\dots& s_{2j-1}\\
1&0&\dots&0\\
\end{vmatrix}=\frac{(-1)^j}{D_{j-1}}
\begin{vmatrix}
s_1&\dots&s_j\\
\hdotsfor{3}\\
s_j&\dots& s_{2j-1}
\end{vmatrix}
\]
and $det(s_{1+l+k})_{l,k=0}^{j-1}$ are positive since the moments $\widetilde{s}_{k}=s_{k+1}$, $k\in\dZ_+$, are the moments of the nontrivial positive measure
$td\mu(t)$ on $[0,+\infty)$~\cite{Ach61},~\cite{Si},~\cite{Wall}. Consequently, the corresponding Jacobi matrix admits an $LU$-decomposition~\cite{BM04}. This means that 
$J$ can be represented as follows
\begin{equation}\label{clLUdec}
J=\begin{pmatrix}
{b}_{0}   & 1 &       &\\
{c}_{0}   &{b}_1    &{1}&\\
        &{c}_1    &{b}_{2} &\ddots\\
&       &\ddots &\ddots\\
\end{pmatrix}=
LU=\begin{pmatrix}
1   & 0 &       &\\
{l}_{1}   & 1    &0 &\\
        &{l}_2    &1 &\ddots\\
&       &\ddots &\ddots\\
\end{pmatrix}\begin{pmatrix}
{u}_{0}   & 1 &       &\\
{0}   &{u}_1    &{1}&\\
        &0    &{u}_{2} &\ddots\\
&       &\ddots &\ddots\\
\end{pmatrix},
\end{equation}
where the entries $u_n$ and $l_n$ can be found by the formulas
\begin{equation}\label{defLUform}
u_n=-\frac{P_{n+1}(0)}{P_n(0)}>0, \quad l_{n+1}=\frac{c_n}{u_{n}}>0,\quad n\in\dZ_+. 
\end{equation}

In fact, in this case, the factorization~\eqref{clLUdec} is a version of the famous Cholesky decomposition,
which is widely used in computational mathematics (for instance, see~\cite{Gau02}). 
Moreover, it turns out that a similar statetment is also true for monic generalized Jacobi matrices~\cite[Proposition~3.1]{DD10}
and the corresponding factorization is related to the Bunch-Kaufman decomposition~\cite{BK77}. 
In the case of the monic generalized Jacobi matrices associated with functions of the form~\eqref{DefSt}, 
the statement is rather obvious up to the matrix structure. 
\begin{prop}\label{ExLU}
Let $\sJ$ be a monic generalized Jacobi matrix associated
with $\sZ$ defined by~\eqref{DefSt}. Then $\sT_{2j}(0)=P_j(0)\ne 0$ for all $j\in\dZ_+$ and 
$\sJ$ admits the following factorization
\begin{equation}\label{ind_LU}
\mathfrak{J}=\mathfrak{L}\mathfrak{U},
\end{equation}
where $\mathfrak{L}$ and $\mathfrak{U}$ are block lower and upper
triangular matrices having the forms
\begin{equation}\label{eq:ind_LU}
\mathfrak{L}=\begin{pmatrix}
I_{2}& 0&       &\\
\mathfrak{L}_{1}   & {I}_{2}   & 0 &\\
        &\mathfrak{L}_2    & {I}_{2} &\ddots\\
&       &\ddots &\ddots\\
\end{pmatrix},\quad
U=\begin{pmatrix}
\mathfrak{U}_{0}   &\mathfrak{D}_{0}&       &\\
{0}   &\mathfrak{U}_{1}    &\mathfrak{D}_{1}&\\
        &0    &\mathfrak{U}_{2} &\ddots\\
&       &\ddots &\ddots\\
\end{pmatrix}
\end{equation}
in which the entries are of the form 
\begin{equation}\label{eq:ind_L}
\mathfrak{L}_{j}=
\begin{pmatrix}
0&0\\
0&\mathfrak{l}_{j}\\
\end{pmatrix},\quad
\mathfrak{D}_{j}=
\begin{pmatrix}
0&0\\
1&0\\
\end{pmatrix},\quad
\mathfrak{U}_j=
             \begin{pmatrix} 0 & 1\\
              \mathfrak{u}_{j}& 0 \end{pmatrix},\quad
{I}_2=
             \begin{pmatrix} 1 & 0\\
              0& 1 \end{pmatrix}.
\end{equation}
Moreover, the following relations hold true
\begin{equation}\label{eq:LU_ind}
\mathfrak{u}_{j}=-\frac{{{\mathfrak T}}_{2j+2}(0)}{{{\mathfrak
T}}_{2j}(0)}=u_j>0,\quad
\mathfrak{l}_{j+1}=\frac{{c}_{j}}{\mathfrak{u}_{j}}=l_{j+1}>0, \quad j\in\dZ_+,
\end{equation}
where $u_j$ and $l_j$ are given by~\eqref{defLUform}.
\end{prop}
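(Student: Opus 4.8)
The plan is to verify the claimed $LU$-factorization directly by multiplying out the block matrices $\mathfrak L$ and $\mathfrak U$ with the indicated entries, and to identify the resulting diagonal and off-diagonal blocks with $\mathfrak B_j$, $\mathfrak C_j$, $\mathfrak D_j$. First I would compute the block at position $(j,j)$ of $\mathfrak L\mathfrak U$: it is $\mathfrak L_j\mathfrak D_{j-1}+\mathfrak U_j$, and a short $2\times 2$ computation gives $\mathfrak L_j\mathfrak D_{j-1}=\begin{pmatrix}0&0\\ \mathfrak l_j&0\end{pmatrix}$, so the $(j,j)$-block equals $\begin{pmatrix}0&1\\ \mathfrak u_j+\mathfrak l_j&0\end{pmatrix}$. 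Comparing with $\mathfrak B_j=\begin{pmatrix}0&1\\ b_j&0\end{pmatrix}$ forces the scalar relation $\mathfrak u_j+\mathfrak l_j=b_j$. Similarly the $(j+1,j)$-block of $\mathfrak L\mathfrak U$ is $\mathfrak L_{j+1}\mathfrak U_j=\begin{pmatrix}0&0\\ \mathfrak l_{j+1}\mathfrak u_j&0\end{pmatrix}$, which must equal $\mathfrak C_j=\begin{pmatrix}0&0\\ c_j&0\end{pmatrix}$, giving $\mathfrak l_{j+1}\mathfrak u_j=c_j$; and the $(j,j+1)$-block is $\mathfrak D_j$ on both sides automatically. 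So the matrix identity $\mathfrak J=\mathfrak L\mathfrak U$ is \emph{equivalent} to the pair of scalar equations $\mathfrak u_j+\mathfrak l_j=b_j$, $\mathfrak l_{j+1}\mathfrak u_j=c_j$ (with $\mathfrak l_0:=0$), which is precisely the classical system~\eqref{defLUform} solved by $u_j,l_{j+1}$; hence setting $\mathfrak u_j=u_j$, $\mathfrak l_{j+1}=l_{j+1}$ makes everything work, provided these scalars are well defined and positive.

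It therefore remains to establish that $\mathfrak T_{2j}(0)=P_j(0)\ne 0$, so that $u_j=-P_{j+1}(0)/P_j(0)$ and $l_{j+1}=c_j/u_j$ make sense. Here I would simply invoke the definition $\mathfrak T_{2j}(\lambda)=\mathfrak P_j(\lambda)=P_j(\lambda^2)$, whence $\mathfrak T_{2j}(0)=P_j(0)$; and since $\mathfrak Z=S(\lambda^2)$ with $S\in{\bf S}_{-\infty}$, the polynomials $P_j$ are the monic orthogonal polynomials of a nontrivial probability measure on $[0,+\infty)$, so the nonvanishing and sign property~\eqref{Nonvan}, namely $(-1)^jP_j(0)>0$, applies verbatim via the determinantal computation already displayed in the excerpt before Proposition~\ref{ExLU}. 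Consequently $u_j=-P_{j+1}(0)/P_j(0)>0$ and $l_{j+1}=c_j/u_j>0$, which is~\eqref{eq:LU_ind}; and the identification $-\mathfrak T_{2j+2}(0)/\mathfrak T_{2j}(0)=-P_{j+1}(0)/P_j(0)=u_j$ is immediate from $\mathfrak T_{2k}(0)=P_k(0)$.

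Finally I would note that the block triangular shapes of $\mathfrak L$ and $\mathfrak U$ in~\eqref{eq:ind_LU} are preserved under the product: $\mathfrak L$ is block lower unitriangular, $\mathfrak U$ is block upper triangular, so $\mathfrak L\mathfrak U$ is block tridiagonal with exactly the three diagonals computed above, matching~\eqref{mJacobi}. There is essentially no obstacle here; the only point requiring care is the bookkeeping that the $2\times2$ block multiplications collapse correctly — in particular that $\mathfrak D_{j-1}$ "shifts" the nonzero entry of $\mathfrak L_j$ into the $(2,1)$-slot and that no spurious contribution appears in the $(1,1)$ or $(2,2)$ slots of the diagonal blocks — and that the degenerate cases $j=0$ (where $\mathfrak L_0$ is absent, i.e. $\mathfrak l_0=0$) are handled by the initial conditions $\mathfrak T_{-1}=0$, $\mathfrak T_0=1$. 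Thus the proposition reduces entirely to the scalar $LU$-theory of~\cite{BM04} transported through the linearization $P_j(\lambda)\mapsto(\mathfrak T_{2j},\mathfrak T_{2j+1})$, and I would present it in that order: block multiplication $\Rightarrow$ scalar equations $\Rightarrow$ $(-1)^jP_j(0)>0$ $\Rightarrow$ positivity of $\mathfrak u_j,\mathfrak l_j$.
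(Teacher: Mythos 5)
Your proposal is correct and follows essentially the same route as the paper: the author likewise multiplies out the block product $\mathfrak{L}\mathfrak{U}$, observes that the diagonal blocks $\mathfrak{L}_j\mathfrak{D}_{j-1}+\mathfrak{U}_j$ and subdiagonal blocks $\mathfrak{L}_{j+1}\mathfrak{U}_j$ reduce the identity to the scalar factorization~\eqref{clLUdec}, and relies on~\eqref{Nonvan} and~\eqref{defLUform} for the nonvanishing of $P_j(0)$ and the positivity of $u_j,l_{j+1}$. Your write-up merely makes explicit the $2\times 2$ bookkeeping that the paper dismisses with ``it is not so hard to see.''
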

\begin{proof}
It is not so hard to see that the equality 
\[
\mathfrak{L}\mathfrak{U}=
\begin{pmatrix}
 \mathfrak{U}_0& \mathfrak{D}_0  & &\\
& & & \\
\mathfrak{L}_{1}\mathfrak{U}_0   &
\mathfrak{L}_1\mathfrak{D}_0+\mathfrak{U}_1
& \mathfrak{D}_1 &\\
&\mathfrak{L}_2\mathfrak{U}_1  & \mathfrak{L}_2\mathfrak{D}_1+\mathfrak{U}_2 &\ddots\\
& & & \\
&       &\ddots &\ddots\\
\end{pmatrix}
=\begin{pmatrix}
\mathfrak{B}_{0}   &\mathfrak{D}_{0}&       &\\
& & & \\
\mathfrak{C}_{0}   &\mathfrak{B}_1    &\mathfrak{D}_{1}&\\
        &\mathfrak{C}_1    &\mathfrak{B}_{2} &\ddots\\
        & & & \\
&       &\ddots &\ddots\\
\end{pmatrix}
\]
reduces to~\eqref{clLUdec}.
\end{proof}

Now, let us return to the monic Jacobi matrix $J$ associated with the Stieltjes function $S\in{\bf S}_{-\infty}$. Recall~\cite{BM04},~\cite{SpZh95},~\cite{Zh97} that the Darboux 
transformation of $J$ is defined in the following manner
\begin{equation}\label{defDarboux}
 J=LU \rightarrow \widetilde{J}=UL.
\end{equation}
It is easy to check that the Darboux transformation $\widetilde{J}$ is a monic Jacobi matrix. Moreover, $\widetilde{J}$ is associated with the measure
$d\widetilde{\mu}(t)=d\mu(t)$~\cite{BM04} or, equivalently, with the function
\[
 \widetilde{S}(\lambda)=\lambda S(\lambda)+s_0=\lambda S(\lambda)+1=\int_{0}^{\infty}\frac{td\mu(t)}{t-\lambda}.
\]
Besides, from~\eqref{defDarboux} one can see how the corresponding  orthogonal polynomials change under the Darboux transformation.
Namely, in view of~\eqref{clLUdec}, the corresponding eigenvector problem~\eqref{JacEigValPr} can be rewritten as follows 
\begin{equation}\label{3termD}
 UL(Up(\lambda))=\lambda(Up(\lambda))
\end{equation}
and simple calculations show that 
\[
 Up(\lambda)=
 \begin{pmatrix}{P_1(\lambda)-\frac{P_1(0)}{P_0(0)}P_0(\lambda)}\\{P_2(\lambda)-\frac{P_2(0)}{P_1(0)}P_1(\lambda)}\\
 {P_3(\lambda)-\frac{P_3(0)}{P_2(0)}P_2(\lambda)}\\ \vdots\end{pmatrix}.
\]
Although the monic polynomials ${P_j(\lambda)-\frac{P_j(0)}{P_{j-1}(0)}P_{j-1}(\lambda)}$ satisfy the three term recurrence relation~\eqref{3termD},
they cannot be orthogonal since they all vanish at 0 and, therefore, the sequence doesn't obey the proper initial condition. However, introducing
\[
 \widetilde{p}(\lambda)=
 \begin{pmatrix}\widetilde{P}_0(\lambda)\\ \widetilde{P}_1(\lambda)\\
 \widetilde{P}_2(\lambda) \\ \vdots\end{pmatrix}
 =\frac{1}{\lambda}Up(\lambda)
\]
we get that
\[
 \widetilde{J}\widetilde{p}(\lambda)=\lambda\widetilde{p}(\lambda)
\]
and $\widetilde{P}_0(\lambda)=1$. Consequently, $\widetilde{P}_j$ are the monic orthogonal polynomials corresponding to $\widetilde{J}=UL$.
Moreover, it can be easily seen from the orthogonality of $P_j$ and the definition of the polynomials $\widetilde{P}_j$, that 
$\widetilde{P}_j$ are orthogonal with respect to $d\widetilde{\mu}(t)=td\mu(t)$. Actually, this is a special case of the Christoffel
formula~\cite[Section~2.5]{Szego}.

The Darboux transformations can be generalized to the case of generalized Jacobi matrices associated with definitizable
functions~\cite{DD10}. In particular, \cite[Theorem 4.2]{DD10} can be complemented
to the following statement.
\begin{thm}\label{DarbouxTheorem}
 Let $\mathfrak{J}$ be a monic generalized Jacobi matrix associated
with $\sZ$ defined by $\sZ(\lambda)=S(\lambda^2)$, $S\in{\bf S}_{-\infty}$ and let $\sJ=\sL\sU$ be its $LU$-factorization of the form~\eqref{ind_LU}.
Then the matrix $\widetilde{\sJ}=\sU\sL$ is the monic classical Jacobi matrix associated with the function $\widetilde{\sZ}$
given by
\[
 \widetilde{\sZ}(\lambda)=\lambda\sZ(\lambda)+\mathfrak{s}_0=\lambda\sZ(\lambda)=\lambda S(\lambda^2).
\]
Besides, the orthogonal polynomials $\widetilde{\sT}_j$ corresponding to $\widetilde{\sJ}$ can be calculated as follows
\begin{equation}
\begin{split}
 \widetilde{\sT}_{2j}(\lambda)&=P_j(\lambda^2),\quad j\in\dZ_+,\\
 \widetilde{\sT}_{2j+1}(\lambda)&=\lambda\widetilde{P}_j(\lambda^2)=
 \frac{1}{\lambda}\left(P_{j+1}(\lambda^2)-\frac{P_{j+1}(0)}{P_{j}(0)}P_{j}(\lambda^2)\right), \quad j\in\dZ_+,
\end{split}
 \end{equation}
where $P_j$ and $\widetilde{P}_j$ correspond to the monic Jacobi matrices $J$ and $\widetilde{J}$, respectively.
\end{thm}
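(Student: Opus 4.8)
The plan is to transfer to the generalized Jacobi matrix $\mathfrak J$ the classical Darboux computation carried out around~\eqref{3termD} above, using the explicit $LU$-factorization $\mathfrak J=\mathfrak L\mathfrak U$ supplied by Proposition~\ref{ExLU}. Recall that, with $P_j$ the monic orthogonal polynomials of $S$ and $\widetilde P_j$ the associated kernel polynomials, the eigenvector of $\mathfrak J$ is $\mathfrak t(\lambda)=(\mathfrak T_0,\mathfrak T_1,\dots)^{\top}$ with $\mathfrak T_{2j}(\lambda)=P_j(\lambda^2)$ and $\mathfrak T_{2j+1}(\lambda)=\lambda P_j(\lambda^2)$.

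First I would compute $\widetilde{\mathfrak J}=\mathfrak U\mathfrak L$ directly from~\eqref{eq:ind_LU}--\eqref{eq:ind_L}. Block-multiplying, the $j$-th diagonal block is $\mathfrak U_j+\mathfrak D_j\mathfrak L_{j+1}=\mathfrak U_j$ (the product $\mathfrak D_j\mathfrak L_{j+1}$ vanishes because the only nonzero entry of $\mathfrak D_j$ sits in its first column while the only nonzero entry of $\mathfrak L_{j+1}$ sits in its second), the super-diagonal blocks are $\mathfrak D_j$, and the sub-diagonal blocks are $\mathfrak U_{j+1}\mathfrak L_{j+1}=\left(\begin{smallmatrix}0&\mathfrak l_{j+1}\\0&0\end{smallmatrix}\right)$. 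Rewriting this $2\times2$-block matrix as an ordinary semi-infinite matrix, one sees that it is in fact scalar tridiagonal, with vanishing main diagonal, all super-diagonal entries equal to $1$, and sub-diagonal entries $\mathfrak u_0,\mathfrak l_1,\mathfrak u_1,\mathfrak l_2,\mathfrak u_2,\dots$, all of which are positive by~\eqref{eq:LU_ind}. Hence $\widetilde{\mathfrak J}$ is a monic Jacobi matrix of the classical form~\eqref{monJac}.

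Next I would read off the eigenvector. Since $\mathfrak J=\mathfrak L\mathfrak U$, from $\mathfrak L\mathfrak U\mathfrak t=\mathfrak J\mathfrak t=\lambda\mathfrak t$ we get $\widetilde{\mathfrak J}(\mathfrak U\mathfrak t)=\mathfrak U(\mathfrak L\mathfrak U\mathfrak t)=\lambda(\mathfrak U\mathfrak t)$, exactly as in~\eqref{3termD}. A block computation gives, for the $j$-th block of $\mathfrak U\mathfrak t$,
\[
\mathfrak U_j\begin{pmatrix}P_j(\lambda^2)\\ \lambda P_j(\lambda^2)\end{pmatrix}+\mathfrak D_j\begin{pmatrix}P_{j+1}(\lambda^2)\\ \lambda P_{j+1}(\lambda^2)\end{pmatrix}=\begin{pmatrix}\lambda P_j(\lambda^2)\\ P_{j+1}(\lambda^2)-\frac{P_{j+1}(0)}{P_j(0)}P_j(\lambda^2)\end{pmatrix},
\]
where I used $\mathfrak u_j=u_j=-P_{j+1}(0)/P_j(0)$ from~\eqref{eq:LU_ind}; by the definition of $\widetilde P_j$ the lower component equals $\lambda^2\widetilde P_j(\lambda^2)$. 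Thus $\mathfrak U\mathfrak t(\lambda)=\lambda\big(P_0(\lambda^2),\,\lambda\widetilde P_0(\lambda^2),\,P_1(\lambda^2),\,\lambda\widetilde P_1(\lambda^2),\dots\big)^{\top}$, and setting $\widetilde{\mathfrak t}(\lambda):=\tfrac1\lambda\mathfrak U\mathfrak t(\lambda)$ yields $\widetilde{\mathfrak J}\widetilde{\mathfrak t}(\lambda)=\lambda\widetilde{\mathfrak t}(\lambda)$ with $\widetilde{\mathfrak T}_{2j}(\lambda)=P_j(\lambda^2)$, $\widetilde{\mathfrak T}_{2j+1}(\lambda)=\lambda\widetilde P_j(\lambda^2)$, and $\widetilde{\mathfrak T}_0(\lambda)=P_0(\lambda^2)=1$. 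These polynomials are monic of degrees $2j$ and $2j+1$ and satisfy the three-term recurrence encoded in $\widetilde{\mathfrak J}$ with the correct initial data, so they are the monic orthogonal polynomials of $\widetilde{\mathfrak J}$. They coincide with the completed system~\eqref{ComplIntr}, which is orthogonal with respect to $\frac{\sg t}{2}d\mu(t^2)$; since the Cauchy--Stieltjes transform of the latter measure is $\lambda S(\lambda^2)$ by~\eqref{Smeasure}, and a monic Jacobi matrix is by definition associated with the corresponding ${\bf N}_{-\infty}$-function, $\widetilde{\mathfrak J}$ is the monic Jacobi matrix associated with $\widetilde{\sZ}(\lambda)=\lambda\sZ(\lambda)+\mathfrak s_0=\lambda S(\lambda^2)$ (the abstract identification $\widetilde{\sZ}=\lambda\sZ+\mathfrak s_0$ being also the content of~\cite[Theorem~4.2]{DD10}).

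The step requiring care is the first one: although $\widetilde{\mathfrak J}$ is a priori only a $2\times2$-block tridiagonal matrix of the generalized type, the particular shape of the factors $\mathfrak L$ and $\mathfrak U$ forces each off-block-diagonal block to carry a single entry and each block-diagonal block to have zero scalar diagonal, so that the block structure collapses to a genuine scalar tridiagonal matrix; one then has to observe that its off-diagonal entries are positive, so that it is a bona fide monic Jacobi matrix. The remaining computations — the block bookkeeping of $\mathfrak U\mathfrak t$ and the kernel-polynomial identity $P_{j+1}(\lambda^2)-\tfrac{P_{j+1}(0)}{P_j(0)}P_j(\lambda^2)=\lambda^2\widetilde P_j(\lambda^2)$ — are routine.
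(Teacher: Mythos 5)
Your proof is correct and follows essentially the same route as the paper: block-multiply $\mathfrak U\mathfrak L$ to see it collapse to a scalar monic Jacobi matrix with positive subdiagonal entries $u_0,\mathfrak l_1,u_1,\dots$, then conjugate the eigenvector relation and normalize $\mathfrak U\mathfrak t$ by $1/\lambda$ to identify $\widetilde{\mathfrak T}_{2j}=P_j(\lambda^2)$ and $\widetilde{\mathfrak T}_{2j+1}=\lambda\widetilde P_j(\lambda^2)$. The only (harmless) divergence is at the very end: the paper identifies the associated function by checking orthogonality with respect to the functional $\widetilde{\mathfrak S}(f)=\mathfrak S(\lambda f)$ via~\eqref{indOrt1}, whereas you invoke the known orthogonality of the completed system~\eqref{ComplIntr} together with~\eqref{Smeasure}.
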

\begin{proof}
 First, simple computations show that
 \[
 %\begin{split}
  \widetilde{\sJ}=\begin{pmatrix}
  \begin{tabular}{ l | c | r | c}
  $\begin{matrix}
  0& 1 \\
  u_0 & 0
 \end{matrix}$ & $\begin{matrix}
  0& 0 \\
  1 & 0
 \end{matrix}$ & $\begin{matrix}
  0& 0 \\
  0 & 0
 \end{matrix}$& $\dots$ \\
  \hline
  $\begin{matrix}
  0& 0 \\
  0 & 0
 \end{matrix}$  & $\begin{matrix}
  0& 1 \\
  u_1 & 0
 \end{matrix}$  & $\begin{matrix}
  0& 0 \\
  1 & 0
 \end{matrix}$& $\ddots$ \\
  \hline
  $\begin{matrix}
  0& 0 \\
  0 & 0
 \end{matrix}$ & $\begin{matrix}
  0& 0 \\
  0 & 0
 \end{matrix}$ & $\begin{matrix}
  0& 1 \\
  u_2 & 0
 \end{matrix}$& $\ddots$ \\
 \hline
 $\vdots$&$\ddots$&$\ddots$&$\ddots$
\end{tabular}
 \end{pmatrix}
 \begin{pmatrix}
  \begin{tabular}{ l | c | r | c}
  $\begin{matrix}
  1& 0 \\
  0 & 1
 \end{matrix}$ & $\begin{matrix}
  0& 0 \\
  0 & 0
 \end{matrix}$ & $\begin{matrix}
  0& 0 \\
  0 & 0
 \end{matrix}$& $\dots$ \\
  \hline
  $\begin{matrix}
  0& 0 \\
  0 & l_1
 \end{matrix}$  & $\begin{matrix}
  1& 0 \\
  0 & 1
 \end{matrix}$  & $\begin{matrix}
  0& 0 \\
  0 & 0
 \end{matrix}$& $\ddots$ \\
  \hline
  $\begin{matrix}
  0& 0 \\
  0 & 0
 \end{matrix}$ & $\begin{matrix}
  0& 0 \\
  0 & l_2
 \end{matrix}$ & $\begin{matrix}
  1& 0 \\
   0& 1
 \end{matrix}$& $\ddots$ \\
 \hline
 $\vdots$&$\ddots$&$\ddots$&$\ddots$
\end{tabular}
 \end{pmatrix}
 \]
 is the tridiagonal matrix 
 \[
 \begin{pmatrix}
0  & 1 &       &&\\
{u}_{0}   &0    &{1}&&\\
        &{l}_1    &0 &1\\
        &&u_1&0&\ddots\\
&&       &\ddots &\ddots\\
\end{pmatrix}.
 %\end{split}
 \]
Taking into account~\eqref{eq:LU_ind} and Favard's theorem, we see that $\widetilde{\sJ}$ is a monic Jacobi matrix associated with a positive measure. 
Next, let us see what happens with the corresponding polynomials. As in the classical case, it is also natural to
introduce the following polynomials
\begin{equation}\label{OPforDT}
\widetilde{\mathfrak{t}}(\lambda)=
 \begin{pmatrix}\widetilde{\sT}_0(\lambda)\\ \widetilde{\sT}_1(\lambda)\\
 \widetilde{\sT}_2(\lambda) \\ \vdots\end{pmatrix}
 =\frac{1}{\lambda}\sU \mathfrak{t}(\lambda).
\end{equation}
Clearly, these polynomials obey the relations
\[
 \widetilde{\sJ}\mathfrak{t}(\lambda)=\lambda\mathfrak{t}(\lambda)
\]
and $\widetilde{\sT}_0(\lambda)=1$. Hence, $\widetilde{\sT}_j$ are monic orthogonal polynomials. 
It follows from~\eqref{OPforDT} that
\begin{equation}\label{defDTpol}
\begin{split}
 \widetilde{\sT}_{2j}(\lambda)&=\frac{1}{\lambda}\sT_{2j+1}(\lambda),\quad j\in\dZ_+,\\
 \widetilde{\sT}_{2j+1}(\lambda)&=\frac{1}{\lambda}\left(\sT_{2j+2}(\lambda)+u_j\sT_{2j}(\lambda)\right),\quad j\in\dZ_+.
 \end{split}
\end{equation}
Applying~\eqref{indOrt1} to~\eqref{defDTpol}, we get that $\widetilde{\sT}_j$ are orthogonal with respect to the functional $\widetilde{\sS}$
defined in the following way
\[
 \widetilde{\sS}(f(\lambda))=\sS(\lambda f(\lambda)), \quad f\in\cP.
\]
Clearly, $\widetilde{\sS}$ generates the same moments as the function $\widetilde{\sF}$.
This observation completes the proof.
\end{proof}

We have just translated the unwrapping transformation 
\[
 S(\lambda)\mapsto\sZ(\lambda)\mapsto\widetilde(\sZ)(\lambda)=\lambda S(\lambda^2)
\]
into the language of matrices:
\begin{equation}\label{DTmatrix}
 J=J(S)\mapsto\sJ=\sJ(\sZ)\mapsto\widetilde{\sJ}=\widetilde{\sJ}(\widetilde{\sZ}).
\end{equation}
Basically, the essential part of~\eqref{DTmatrix} is a Darboux transformation. So, we will call $\widetilde{\sJ}$
the extended Darboux transformation of $J$. 

It is worth mentioning here that $1$-periodic dressing chains of the extended Darboux transformation of $J$, i.e. the equations of the form
\[
J-qI=\widetilde{\sJ}, \quad q>0, 
\]
can lead to pure singular measures supported on Cantor sets~\cite{BMM82}. These dressing chains are related to the Ferromagnetic Ising Model~\cite{BMM82}.
Some other dressing chains generated by Jacobi matrices can be found in~\cite{DTVZh12},~\cite{SZ}.

We conclude this section with the following.
\begin{rem}
 It is not so hard to see that an extended Darboux transformation can be also defined for the Jacobi matrix associated with $F\in{\bf N}_{-\infty}$
 provided that the corresponding monic orthogonal polynomials satisfy
 \[
  P_j(0)\ne 0, \quad j\in\dZ_+.
 \]
In this case, the matrix $\widetilde{\sJ}=\widetilde{\sJ}(\lambda F(\lambda^2))$ is not necessarily generated by a positive measure. Nevertheless,
the matrix $\widetilde{\sJ}$ can be constructed from the $J$-fraction expansion of $\lambda F(\lambda^2)$.
\end{rem}

\section{The Chihara construction and shifted Darboux transformations}

Here we will give a matrix interpretation of the Chihara construction~\cite{Chih64} (see also~\cite{DTVZh12},~\cite{DVZ12}, where it was used to 
study SDG maps) of solutions of the Carlitz problem related to polynomial mappings~\cite{Car61},~\cite{DW63}. 
For this purpose, consider the following function
\begin{equation}\label{IntRofSFunctionSh}
 S(\lambda)=\int_{a}^{\infty}\frac{d\mu(t)}{t-\lambda},
\end{equation}
where $a>0$. The gap $(0,a)$ allows to do shifted Darboux transformations and, therefore, we can generalize Proposition~\ref{ExLU} to the following statement.
\begin{thm}\label{ExLUshift}
Let $S$ have the representation~\eqref{IntRofSFunctionSh} and let $\mathfrak{J}$ be a monic generalized Jacobi 
matrix associated with $\mathfrak{Z}(\lambda)=S(\lambda^2)$, where $S$ is given by~\eqref{IntRofSFunctionSh}. Then for any $\alpha\in(0,\sqrt{a})$ we have that 
\[
(-1)^j\sT_{2j}(\alpha)=(-1)^jP_j(\alpha^2)> 0,\quad j\in\dZ_+ ,
\]
 and 
$\sJ-\alpha I$ admits the following factorization
\begin{equation}\label{ind_LUshift}
\mathfrak{J}-\alpha I=\mathfrak{L}\mathfrak{U},
\end{equation}
where $\mathfrak{L}$ and $\mathfrak{U}$ are block lower and upper
triangular matrices having the forms
\begin{equation}\label{eq:ind_LUshift}
\mathfrak{L}=\begin{pmatrix}
\mathfrak{E}_{0}& 0&       &\\
\mathfrak{L}_{1}   & \mathfrak{E}_{1}   & 0 &\\
        &\mathfrak{L}_2    & \mathfrak{E}_{2} &\ddots\\
&       &\ddots &\ddots\\
\end{pmatrix},\quad
\sU=\begin{pmatrix}
\mathfrak{U}_{0}   &\mathfrak{D}_{0}&       &\\
{0}   &\mathfrak{U}_{1}    &\mathfrak{D}_{1}&\\
        &0    &\mathfrak{U}_{2} &\ddots\\
&       &\ddots &\ddots\\
\end{pmatrix}
\end{equation}
in which the entries are of the form 
\begin{equation}\label{eq:ind_Lshift}
\mathfrak{L}_{j}=
\begin{pmatrix}
0&0\\
0&\mathfrak{l}_{j}\\
\end{pmatrix},\quad
\mathfrak{D}_{j}=
\begin{pmatrix}
0&0\\
1&0\\
\end{pmatrix},\quad
\mathfrak{U}_j=
             \begin{pmatrix} -\alpha & 1\\
              \mathfrak{u}_{j}& 0 \end{pmatrix},\quad
\mathfrak{E}_j=
             \begin{pmatrix} 1 & 0\\
              -\alpha & 1 \end{pmatrix}.
\end{equation}
Moreover, the following relations hold true
\begin{equation}\label{eq:LU_indshift}
\mathfrak{u}_{j}=-\frac{{{\mathfrak T}}_{2j+2}(\alpha)}{{{\mathfrak
T}}_{2j}(\alpha)}=-\frac{P_{j+1}(\alpha^2)}{P_{j}(\alpha^2)}>0,\quad
\mathfrak{l}_{j+1}=\frac{{c}_{j}}{\mathfrak{u}_{j}}>0, \quad j\in\dZ_+.
\end{equation}
\end{thm}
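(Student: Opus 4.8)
The plan is to follow the pattern of Proposition~\ref{ExLU}, but with the shift by $\alpha$ accounted for throughout. First I would establish the sign condition $(-1)^j P_j(\alpha^2) > 0$. Since $S$ is represented by a measure supported on $[a,+\infty)$ and $\alpha \in (0,\sqrt a)$, the point $\alpha^2$ lies strictly to the left of the support; hence $d\mu(t)/(t-\alpha^2)$ is a positive measure on $[a,+\infty)$ (equivalently, $P_j(\alpha^2)$ has the sign of the $j$-th monic orthogonal polynomial evaluated below the spectrum), and a standard argument — either via the determinantal formula as in~\eqref{Nonvan} applied to the shifted moments $\widehat s_k = \int (t-\alpha^2)^{-1}\,t^k\,d\mu(t)$, or via the fact that below the support a monic OP strictly alternates in sign — gives $(-1)^jP_j(\alpha^2)>0$. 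In particular $\sT_{2j}(\alpha)=P_j(\alpha^2)\neq 0$ for all $j$, which is exactly what guarantees the factorization exists.

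Next I would verify~\eqref{ind_LUshift} by a direct block computation. Writing out $\mathfrak{L}\mathfrak{U}$ with the proposed entries~\eqref{eq:ind_Lshift}, one computes the diagonal blocks $\mathfrak{E}_j\mathfrak{U}_j$, the subdiagonal blocks $\mathfrak{L}_{j}\mathfrak{U}_{j-1}$, the superdiagonal blocks $\mathfrak{E}_j\mathfrak{D}_j$, and checks that $\mathfrak{L}_j\mathfrak{D}_{j-1}+\mathfrak{E}_j\mathfrak{U}_j$ reproduces $\mathfrak{B}_j-\alpha I_2$ (and similarly for the $0$-th block). Each of these is a $2\times 2$ multiplication, and the resulting scalar conditions collapse exactly to the classical shifted factorization $J-\alpha^2 I = \widehat L\widehat U$ of the underlying monic Jacobi matrix $J$ associated with $S$, where the diagonal of $\widehat U$ is $\mathfrak{u}_j$ and the subdiagonal of $\widehat L$ is $\mathfrak{l}_{j+1}$. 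So the heart of the matter is reduced to the known $LU$-decomposition of $J-\alpha^2 I$, which is available because the shifted OP's $P_j(\alpha^2)$ satisfy the alternating-sign / nonvanishing condition just established.

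Then I would read off formulas~\eqref{eq:LU_indshift}. From the classical shifted $LU$-decomposition, the pivots are $\mathfrak{u}_j = -P_{j+1}(\alpha^2)/P_j(\alpha^2)$, which is positive by the sign condition (consecutive terms have opposite signs), and the off-diagonal entry is $\mathfrak{l}_{j+1}=c_j/\mathfrak{u}_j>0$ since $c_j>0$. The identity $\mathfrak{T}_{2j+2}(\alpha)/\mathfrak{T}_{2j}(\alpha)=P_{j+1}(\alpha^2)/P_j(\alpha^2)$ is immediate from $\mathfrak{T}_{2j}(\lambda)=P_j(\lambda^2)$. One should also note that the shift preserves the block-tridiagonal structure of $\sJ$ and merely changes $\mathfrak{B}_j$ to $\mathfrak{B}_j-\alpha I_2 = \begin{pmatrix} -\alpha & 1\\ b_j & -\alpha\end{pmatrix}$, which is consistent with the claimed shapes of $\mathfrak{E}_j$ and $\mathfrak{U}_j$.

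The main obstacle, such as it is, is bookkeeping: one must choose the "split" of the $-\alpha$'s between $\mathfrak{E}_j$ and $\mathfrak{U}_j$ correctly so that the block products land on the shifted generalized Jacobi matrix, and one must confirm that the off-diagonal $\mathfrak{D}$ and $\mathfrak{C}$ blocks (which have only a single nonzero entry) force all the would-be extra degrees of freedom to vanish — i.e.\ that the factorization in this block form is as rigid as in the scalar case. Once the block algebra is organized so that it manifestly reduces to~\eqref{clLUdec} for $J-\alpha^2 I$, everything else is a consequence of the classical theory of Darboux/Christoffel transformations together with the sign property $(-1)^jP_j(\alpha^2)>0$.
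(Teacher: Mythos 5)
Your proof takes essentially the same route as the paper: the block algebra collapses to the classical shifted factorization $J-\alpha^2 I=LU$ of the underlying monic Jacobi matrix, and the positivity of the pivots $\mathfrak{u}_j=-P_{j+1}(\alpha^2)/P_j(\alpha^2)$ follows from $(-1)^jP_j(\alpha^2)>0$, which the paper obtains from the determinantal formula whose Hankel determinants are those of the positive measure $(t-\alpha^2)\,d\mu(t)$ on $[a,+\infty)$. The only slip is in your first suggested sign argument, where the auxiliary measure should be $(t-\alpha^2)\,d\mu(t)$ with moments $s_{k+1}-\alpha^2 s_k$ rather than $d\mu(t)/(t-\alpha^2)$; your alternative argument via the zeros of $P_j$ lying in the convex hull of the support is correct and suffices.
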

\begin{proof}
 Due to the structures of $\sJ$, $\sL$, and $\sU$, the relation
 \[
  (\sJ-\alpha I)=\sL\sU
 \]
is equivalent to the classical one
\[
 J-\alpha^2 I=LU,
\]
which can be obtained from~\eqref{clLUdec} via considering the monic Jacobi matrix $J^{(\alpha)}=J-\alpha^2 I$
corresponding to the Stietjes function 
\[
 S^{(\alpha)}(\lambda)=S(\lambda+\alpha^2)=\int_{a-\alpha^2}^{\infty}\frac{d\mu(t)}{t-\lambda}
\]
(for more details see~\cite{BM04}). It remains to observe that 
\[
 P_j(0)=\frac{1}{D_{j-1}}
 \begin{vmatrix}
s_0&s_1&\dots&s_j\\
\hdotsfor{4}\\
s_{j-1}&s_j&\dots& s_{2j-1}\\
1&\alpha^2&\dots&\alpha^{2j}\\
\end{vmatrix}=\frac{(-1)^j}{D_{j-1}}
\begin{vmatrix}
s_1-\alpha^2 s_0&\dots&s_j-\alpha^2s_0\\
\hdotsfor{3}\\
s_j-\alpha^2s_{j-1}&\dots& s_{2j-1}-\alpha^2s_{2j-2}
\end{vmatrix}
\]
and $\det(s_{1+l+k}-\alpha^2s_{l+k})_{l.k=0}^{j-1}$ are positive because $s_{k+1}-\alpha^2s_{k}$ are
the moments of the nontrivial positive measure $(t-\alpha^2)d\mu(t)$ supported in $[a,+\infty)$.
\end{proof}

The next step is to generalize Theorem~\ref{DarbouxTheorem} to the shifted case.
\begin{thm}\label{shiftedDarbouxTheorem} In the settings of Theorem~\ref{ExLUshift} we get that 
the matrix 
\[
\widetilde{\sJ}^{(\alpha)}=\sU\sL+\alpha  I          
 \]
 is the monic classical Jacobi matrix associated with the function $\widetilde{\sF}^{(\alpha)}$
given by
\[
 \widetilde{\sF}^{(\alpha)}(\lambda)=(\lambda-\alpha) S(\lambda^2).
\]
Besides, the orthogonal polynomials $\widetilde{\sT}_j^{(\alpha)}$ corresponding to $\widetilde{\sJ}^{(\alpha)}$ can be calculated as follows
\begin{equation}
\begin{split}
 \widetilde{\sT}_{2j}^{(\alpha)}(\lambda)&=P_j(\lambda^2),\quad j\in\dZ_+,\\
 \widetilde{\sT}_{2j+1}^{(\alpha)}(\lambda)&=(\lambda+{\alpha})\widetilde{P}_j^{(\alpha)}(\lambda^2)=
 \frac{1}{\lambda-\alpha}\left(P_{j+1}(\lambda^2)-\frac{P_{j+1}(\alpha^2)}{P_{j}(\alpha^2)}P_{j}(\lambda^2)\right), \quad j\in\dZ_+,
\end{split}
 \end{equation}
where the polynomials $P_j$ correspond to the monic Jacobi matrix $J$ associated with the Stieltjes function $S$.
\end{thm}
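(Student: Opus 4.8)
The plan is to imitate the proof of Theorem~\ref{DarbouxTheorem}, carrying the extra shift $\alpha$ through every step. First I would multiply out $\sU\sL$ using the explicit block entries $\mathfrak{U}_j,\mathfrak{D}_j,\mathfrak{L}_j,\mathfrak{E}_j$ of Theorem~\ref{ExLUshift}: a short block computation (in which $\mathfrak{U}_j\mathfrak{E}_j$ contributes a diagonal $-2\alpha$, $\mathfrak{D}_j\mathfrak{L}_{j+1}$ vanishes, $\mathfrak{D}_j\mathfrak{E}_{j+1}=\mathfrak{D}_j$, and $\mathfrak{U}_{j+1}\mathfrak{L}_{j+1}$ contributes the off-diagonal entry $\mathfrak{l}_{j+1}$) shows that, after adding $\alpha I$, the matrix $\widetilde{\sJ}^{(\alpha)}=\sU\sL+\alpha I$ is the scalar tridiagonal matrix all of whose superdiagonal entries equal $1$, whose subdiagonal entries are $\mathfrak{u}_0,\mathfrak{l}_1,\mathfrak{u}_1,\mathfrak{l}_2,\dots$, and whose main diagonal is $-\alpha,\alpha,-\alpha,\alpha,\dots$. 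By~\eqref{eq:LU_indshift} every subdiagonal entry is strictly positive, so by Favard's theorem $\widetilde{\sJ}^{(\alpha)}$ is a monic Jacobi matrix associated with a positive (in fact probability, since the zeroth moment will come out $\mathfrak{s}_1-\alpha\mathfrak{s}_0=s_0=1$) measure.

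Next I would produce the corresponding orthogonal polynomials. From $\sJ\mathfrak{t}(\lambda)=\lambda\mathfrak{t}(\lambda)$ we get $(\sJ-\alpha I)\mathfrak{t}(\lambda)=(\lambda-\alpha)\mathfrak{t}(\lambda)$, i.e. $\sL\sU\mathfrak{t}(\lambda)=(\lambda-\alpha)\mathfrak{t}(\lambda)$; multiplying by $\sU$ on the left and adding $\alpha\sU\mathfrak{t}(\lambda)$ gives $\widetilde{\sJ}^{(\alpha)}\bigl(\sU\mathfrak{t}(\lambda)\bigr)=\lambda\,\sU\mathfrak{t}(\lambda)$. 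A blockwise evaluation of $\sU\mathfrak{t}(\lambda)$ shows that its $(2j)$-th entry is $\sT_{2j+1}(\lambda)-\alpha\sT_{2j}(\lambda)=(\lambda-\alpha)P_j(\lambda^2)$ and its $(2j{+}1)$-th entry is $\sT_{2j+2}(\lambda)+\mathfrak{u}_j\sT_{2j}(\lambda)=P_{j+1}(\lambda^2)-\tfrac{P_{j+1}(\alpha^2)}{P_j(\alpha^2)}P_j(\lambda^2)$, using $\mathfrak{u}_j=-P_{j+1}(\alpha^2)/P_j(\alpha^2)$. The latter vanishes at $\lambda=\pm\alpha$, hence is divisible by $\lambda^2-\alpha^2$, so
\[
\widetilde{\mathfrak{t}}^{(\alpha)}(\lambda):=\frac{1}{\lambda-\alpha}\,\sU\mathfrak{t}(\lambda)
\]
has polynomial entries, begins with $1$, and still satisfies $\widetilde{\sJ}^{(\alpha)}\widetilde{\mathfrak{t}}^{(\alpha)}(\lambda)=\lambda\widetilde{\mathfrak{t}}^{(\alpha)}(\lambda)$; hence its entries are the monic orthogonal polynomials $\widetilde{\sT}^{(\alpha)}_j$ of $\widetilde{\sJ}^{(\alpha)}$. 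Reading them off gives $\widetilde{\sT}^{(\alpha)}_{2j}(\lambda)=P_j(\lambda^2)$ together with the stated formula for $\widetilde{\sT}^{(\alpha)}_{2j+1}$; writing $\tfrac{1}{\lambda-\alpha}=\tfrac{\lambda+\alpha}{\lambda^2-\alpha^2}$ and recognizing $\widetilde{P}^{(\alpha)}_j(s)=\bigl(P_{j+1}(s)-\tfrac{P_{j+1}(\alpha^2)}{P_j(\alpha^2)}P_j(s)\bigr)/(s-\alpha^2)$ as the monic kernel (Christoffel) polynomial for $(t-\alpha^2)\,d\mu(t)$ (cf.~\cite[Section~2.5]{Szego}) yields the alternative form $(\lambda+\alpha)\widetilde{P}^{(\alpha)}_j(\lambda^2)$.

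Finally I would identify the underlying measure through a functional, again as in Theorem~\ref{DarbouxTheorem}. Set $\widetilde{\sS}^{(\alpha)}(f(\lambda)):=\sS\bigl((\lambda-\alpha)f(\lambda)\bigr)$ for $f\in\cP$. Using the identities $(\lambda-\alpha)\widetilde{\sT}^{(\alpha)}_{2j}(\lambda)=\sT_{2j+1}(\lambda)-\alpha\sT_{2j}(\lambda)$ and $(\lambda-\alpha)\widetilde{\sT}^{(\alpha)}_{2j+1}(\lambda)=\sT_{2j+2}(\lambda)+\mathfrak{u}_j\sT_{2j}(\lambda)$, a one-step application of~\eqref{indOrt1} gives $\widetilde{\sS}^{(\alpha)}\bigl(\widetilde{\sT}^{(\alpha)}_m(\lambda)\lambda^k\bigr)=0$ for $0\le k<m$ and $\neq 0$ for $k=m$, so the $\widetilde{\sT}^{(\alpha)}_j$ are precisely the monic orthogonal polynomials with respect to $\widetilde{\sS}^{(\alpha)}$. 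Its moments are $\widetilde{\sS}^{(\alpha)}(\lambda^j)=\mathfrak{s}_{j+1}-\alpha\mathfrak{s}_j$, which are exactly the coefficients in the asymptotic expansion $(\lambda-\alpha)S(\lambda^2)\sim-\sum_{j\ge0}\frac{\mathfrak{s}_{j+1}-\alpha\mathfrak{s}_j}{\lambda^{j+1}}$; hence $\widetilde{\sJ}^{(\alpha)}$ is the monic classical Jacobi matrix associated with $\widetilde{\sF}^{(\alpha)}(\lambda)=(\lambda-\alpha)S(\lambda^2)$, which completes the proof. I expect the only genuinely fiddly point to be the bookkeeping in the first two paragraphs — in particular checking that $\lambda-\alpha$ divides the odd-indexed entries of $\sU\mathfrak{t}(\lambda)$, equivalently that $\widetilde{\sT}^{(\alpha)}_{2j+1}$ is a polynomial; the rest is a faithful transcription of the $\alpha=0$ case.
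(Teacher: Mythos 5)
Your proposal is correct and follows essentially the same route as the paper's own proof: the explicit block computation of $\sU\sL+\alpha I$, the eigenvector chain $(\sJ-\alpha I)\mathfrak{t}=(\lambda-\alpha)\mathfrak{t}\Rightarrow\widetilde{\sJ}^{(\alpha)}\sU\mathfrak{t}=\lambda\sU\mathfrak{t}$ followed by the normalization $\frac{1}{\lambda-\alpha}\sU\mathfrak{t}$, and the orthogonality with respect to $\widetilde{\sS}^{(\alpha)}(f)=\sS((\lambda-\alpha)f)$ via~\eqref{indOrt1}. You merely spell out a few details the paper leaves implicit (the divisibility of the odd entries by $\lambda-\alpha$ and the moment identification), which are all verified correctly.
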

\begin{proof}
 First, we can check by straightforward computations that the matrix
 \[
 %\begin{split}
  \widetilde{\sJ}^{(\alpha)}=\begin{pmatrix}
  \begin{tabular}{ l | c | r | c}
  $\begin{matrix}
  -\alpha & 1 \\
\mathfrak{u}_0 & 0
 \end{matrix}$ & $\begin{matrix}
  0& 0 \\
  1 & 0
 \end{matrix}$ & $\begin{matrix}
  0& 0 \\
  0 & 0
 \end{matrix}$& $\dots$ \\
  \hline
  $\begin{matrix}
  0& 0 \\
  0 & 0
 \end{matrix}$  & $\begin{matrix}
  -\alpha& 1 \\
  \mathfrak{u}_1 & 0
 \end{matrix}$  & $\begin{matrix}
  0& 0 \\
  1 & 0
 \end{matrix}$& $\ddots$ \\
  \hline
  $\begin{matrix}
  0& 0 \\
  0 & 0
 \end{matrix}$ & $\begin{matrix}
  0& 0 \\
  0 & 0
 \end{matrix}$ & $\begin{matrix}
  -\alpha& 1 \\
  \mathfrak{u}_2 & 0
 \end{matrix}$& $\ddots$ \\
 \hline
 $\vdots$&$\ddots$&$\ddots$&$\ddots$
\end{tabular}
 \end{pmatrix}
 \begin{pmatrix}
  \begin{tabular}{ l | c | r | c}
  $\begin{matrix}
  1& 0 \\
  -\alpha& 1
 \end{matrix}$ & $\begin{matrix}
  0& 0 \\
  0 & 0
 \end{matrix}$ & $\begin{matrix}
  0& 0 \\
  0 & 0
 \end{matrix}$& $\dots$ \\
  \hline
  $\begin{matrix}
  0& 0 \\
  0 & \mathfrak{l}_1
 \end{matrix}$  & $\begin{matrix}
  1& 0 \\
  -\alpha & 1
 \end{matrix}$  & $\begin{matrix}
  0& 0 \\
  0 & 0
 \end{matrix}$& $\ddots$ \\
  \hline
  $\begin{matrix}
  0& 0 \\
  0 & 0
 \end{matrix}$ & $\begin{matrix}
  0& 0 \\
  0 & \mathfrak{l}_2
 \end{matrix}$ & $\begin{matrix}
  1& 0 \\
   -\alpha& 1
 \end{matrix}$& $\ddots$ \\
 \hline
 $\vdots$&$\ddots$&$\ddots$&$\ddots$
\end{tabular}
 \end{pmatrix}+\alpha I
\]
coincides with the monic classical Jacobi matrix 
\[
 \begin{pmatrix}
-\alpha & 1 &       &&\\
\mathfrak{u}_{0}   &\alpha   &{1}&&\\
        &\mathfrak{l}_1    &-\alpha &1\\
        &&\mathfrak{u}_1&\alpha&\ddots\\
&&       &\ddots &\ddots\\
\end{pmatrix}.
% \end{split}
 \]
Regarding the corresponding polynomials, we have the chain of transformations 
 \[
  (\sJ-\alpha I)\mathfrak{t}(\lambda)=(\lambda-\alpha)\mathfrak{t}(\lambda) \Rightarrow
   (\sU\sL+\alpha I)\sU\mathfrak{t}(\lambda)=\lambda\sU\mathfrak{t}(\lambda)\Rightarrow
   \widetilde{\sJ}^{(\alpha)} \sU\mathfrak{t}(\lambda)=\lambda\sU\mathfrak{t}(\lambda).
 \]
In this case, we have to normalize the vector $\sU\mathfrak{t}$ in the following way 
 \[
 \widetilde{\mathfrak{t}}(\lambda)=
 \begin{pmatrix}\widetilde{\sT}_0^{(\alpha)}(\lambda)\\ \widetilde{\sT}_1^{(\alpha)}(\lambda)\\
 \widetilde{\sT}_2^{(\alpha)}(\lambda) \\ \vdots\end{pmatrix}
 =\frac{1}{\lambda-\alpha}\sU \mathfrak{t}(\lambda)
 \]
 in order the polynomials $\widetilde{\sT}_j^{(\alpha)}$ satisfy the proper initial condition $\widetilde{\sT}_0^{(\alpha)}(\lambda)=1$.
 The orthogonality with respect to the functional %$\widetilde{\sS}^{(\alpha)}$:
\[
 \widetilde{\sS}^{(\alpha)}(f(\lambda))=\sS((\lambda-\alpha) f(\lambda)), \quad f\in\cP,
\]
is also a simple consequence of~\eqref{indOrt1}. For example, let us check that $\widetilde{\sT}^{(\alpha)}_{2j+1}(\lambda)$
is orthogonal to $1$, $\lambda$, \dots, $\lambda^{2j}$:
\[
\begin{split}
 \widetilde{\sS}\left(\lambda^k\widetilde{\sT}^{(\alpha)}_{2j+1}(\lambda)\right)&=
 \sS\left(\lambda^k\left(P_{j+1}(\lambda^2)-\frac{P_{j+1}(\alpha^2)}{P_{j}(\alpha^2)}P_{j}(\lambda^2)\right)\right)\\
  &=\sS\left(\lambda^kP_{j+1}(\lambda^2)\right)+\mathfrak{u}_j\sS\left(\lambda^kP_{j}(\lambda^2)\right)=0, \quad k=0,\dots, 2j.
  \end{split}
\]
Finally, the moments of $ \widetilde{\sS}^{(\alpha)}$ coincide with the moments of $(\lambda-\alpha)S(\lambda^2)$ given
by the asymptotic expansion at infinity. 
\end{proof}
\begin{rem}
 Interestingly, in Theorem~\ref{ExLUshift} and in Theorem~\ref{shiftedDarbouxTheorem} one can take $i\alpha$, $\alpha\in\dR$,
 instead of $\alpha\in{(0,\sqrt{a})}$ and all conclusions will remain the same. In particular, we have that 
\[
\mathfrak{u}_{j}=-\frac{P_{j+1}(-\alpha^2)}{P_{j}(-\alpha^2)}>0,\quad
\mathfrak{l}_{j+1}=\frac{{c}_{j}}{\mathfrak{u}_{j}}>0, \quad j\in\dZ_+.
\]
 Thus, the shifted extended Darboux transformation
 leads to the complex Jacobi matrix
 \[
  \begin{pmatrix}
-i\alpha & 1 &       &&\\
\mathfrak{u}_{0}   &i\alpha   &{1}&&\\
        &\mathfrak{l}_1    &-i\alpha &1\\
        &&\mathfrak{u}_1&i\alpha&\ddots\\
&&       &\ddots &\ddots\\
\end{pmatrix}
 \]
corresponding to the complex measure $\frac{t+i\alpha}{|t|}d\mu(t^2)$ on $\dR$ (cf.~\cite{Chih64}).
\end{rem}

\noindent{\bf Acknowledgments}.
I am deeply indebted to Professor A.S.~Zhedanov for bringing~\cite{Chih64} to my attention and
numerous discussions which improved my understanding of the matter. I am also grateful to
Professor A.I.~Aptekarev for his remarks after my talk on SDG maps at "Quatri\`emes Journ\'ees Approximation"
in Lille. Those remarks stimulated me to write the present paper.

\end{document}